\definecolor{ForestGreen}{RGB}{34,139,34}
\lstdefinelanguage{Julia}%
  {morekeywords={abstract,break,case,catch,const,continue,do,else,elseif,%
      end,export,false,for,function,immutable,import,importall,if,in,%
      macro,module,otherwise,quote,return,switch,true,try,type,typealias,%
      using,while},%
   sensitive=true,%
   alsoother={$},%
   morecomment=[l]\#,%
   morecomment=[n]{\#=}{=\#},%
   morestring=[s]{"}{"},%
   morestring=[m]{'}{'},%
}[keywords,comments,strings]%
\bfseries\color{blue},
\newtheorem{thm}{Theorem}[section]
\newtheorem{cor}[thm]{Corollary}
\newtheorem{prop}[thm]{Proposition}
\theoremstyle{definition}
\newtheorem{defn}[thm]{Definition}
\theoremstyle{remark}
\newtheorem{rem}[thm]{Remark}
\numberwithin{equation}{section}
\newcommand{\ER}{Erd\H{o}s-R\'{e}nyi }
\newcolumntype{H}{>{\setbox0=\hbox\bgroup}c<{\egroup}@{}}
\def\svec{\mbox{\bf svec}}
\begin{document}

\title{Jordan symmetry reduction for conic optimization over the doubly nonnegative cone: theory and software\tnoteref{n1}}
\tnotetext[n1]{
\begin{minipage}[b]{0.8\linewidth}
This project has received funding from the European Union’s Horizon 2020 research and innovation programme under the Marie Skłodowska-Curie grant agreement No 764759.
\end{minipage}
\hfill
\begin{minipage}[b]{0.15\linewidth}
\hfill\includegraphics[height=2.7\baselineskip]{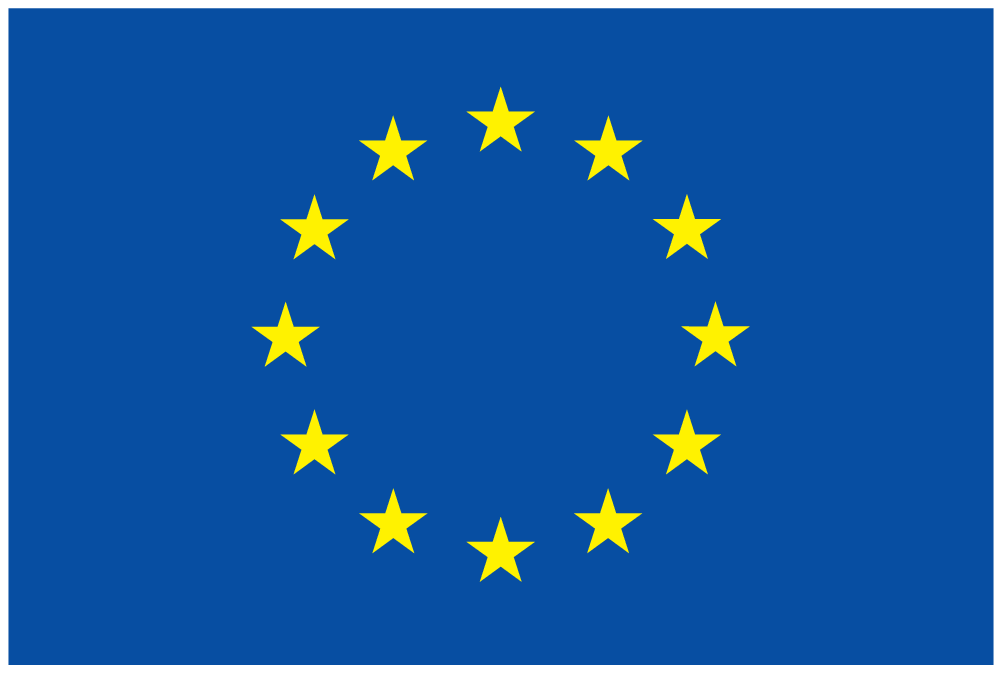}
\end{minipage}
}
\author{Daniel Brosch, Etienne de Klerk}%
\address{Tilburg University}
\date{\today}%
\begin{abstract}
A common computational approach for polynomial optimization problems (POPs) is to use (hierarchies of) semidefinite programming (SDP) relaxations. When the variables in the POP are required to be nonnegative, these SDP problems typically involve nonnegative matrices, i.e. they are conic optimization problems over the doubly nonnegative cone. The Jordan reduction, a symmetry reduction method for conic optimization, was recently introduced for symmetric cones by Parrilo and Permenter [Mathematical Programming 181(1), 2020]. We extend this method to the doubly nonnegative cone, and investigate its application to known relaxations of the quadratic assignment and maximum stable set problems. We also introduce new Julia software where the symmetry reduction is implemented.
\end{abstract}
\begin{keyword}
Quadratic assignment problem \sep Semidefinite programming \sep Symmetry reduction \MSC{90C22\sep 20B40}
\end{keyword}
\maketitle

\section{Introduction}
This paper studies symmetry reduction of semidefinite programs (SDPs) where the matrix variable is also entry-wise nonnegative,
i.e.\ symmetry reduction of conic linear programming over the doubly nonnegative cone.
Such problems appear naturally in the study of convex relaxations of combinatorial problems.
In particular, we are interested in such relaxations of the independence number $\alpha$ of a graph, and the quadratic assignment problem (QAP).

The independence number $\alpha$ of a graph $G$ is the maximum number of nodes of $G$ we can choose, such that there is no edge between any of them. The Theta-Prime function ($\vartheta'$-function) is a semidefinite relaxation of $\alpha$, and as such gives an upper bound to it. Given the adjacency matrix $A$ of $G$ we have
\begin{align}\label{thetaPrime}
  \vartheta'(G) = \sup \enspace& \langle J,X\rangle \\
  \text{s.t.} \enspace& \mathrm{trace}(X) = 1, \nonumber\\
   & \langle A,X\rangle = 0,\nonumber\\
   & X \geq 0,\nonumber\\
   & X \succcurlyeq 0.\nonumber
\end{align}

The second family of problems we are interested are quadratic assignment problem, which are of the form
\begin{equation}
  QAP(A,B) = \min_{\varphi\in S_n} \sum_{i,j=1}^{n}a_{ij}b_{\varphi(i)\varphi(j)},
  \label{def:qapAB}
\end{equation}
where $A = (a_{ij})$ and $B = (b_{ij})$ are square $n\times n$ matrices, and $S_n$ denotes the symmetric group (i.e.\ all permutations) on $n$ elements. Here we are interested in the SDP relaxation of by Zhao et al.\ \cite{zhao1998semidefinite}.

Symmetry reduction for SDP was first introduced by Schrijver in 1979 in \cite{schrijver1979comparison};
 see for example the chapter \cite{bachoc2012invariant}
  by Bachoc, Gijswijt, Schrijver and Vallentin for a review of later developments up to 2012. The specific case of SDP relaxations of quadratic assignment problems was investigated
   by De Klerk and Sotirov in \cite{de2010exploiting,de2012improved}.

Parrilo and Permenter \cite{permenter2016dimension} recently introduced a new --- and more general --- form of symmetry reduction, called \emph{Jordan reduction}.
We will extend their approach to the doubly-nonnegative cone.

\subsection*{Outline and contributions of this paper}
In the next section, we recap relevant definitions and results on the Jordan reduction of Parrilo and Permenter \cite{permenter2016dimension}.
In Section \ref{sec:extenD} we subsequently extend this approach --- which was formulated for symmetric cones --- to the doubly nonnegative cone.
This allows us to apply the method to the SDP relaxation of the general QAP due to  Zhao et al.\ \cite{zhao1998semidefinite} in Section \ref{sec:QAPreduce}, and to the $\vartheta'$-function of \ER-graphs in Section \ref{ThetaER}.
Our extension of the Jordan reduction method of Parrilo and Permenter \cite{permenter2016dimension} in Section \ref{sec:extenD} should lead to additional applications in SDP relaxations of other combinatorial problems. Finally, in Section \ref{JuliaPackageSection}, we describe a Julia software package implementing this method.

\section{Preliminaries on Jordan symmetry reduction }
We will study conic optimization problems in the form
\begin{equation}\label{def:conic opt problem}
\left.
\begin{array}{llrl}
  \inf\enspace&\langle C,X\rangle & \;\;=\inf\enspace&\langle C,X\rangle  \\
  \mathrm{s.t.}\enspace& \langle A_i,X\rangle = b_i \text{ for } i \in [m] &\;\;\mathrm{s.t.}\enspace& X\in X_0+\mathcal{L} \\
   &X\in \mathcal{K}&& X\in \mathcal{K},
\end{array}
  \right\}
  \end{equation}
where $[m] = \{1,\ldots,m\}$, $\mathcal{K}\subseteq \mathcal{V}$ is a closed, convex cone in a real Hilbert space $\mathcal{V}$, $X_0 \in \mathcal{V}$ satisfies
 $\langle A_i, X_0 \rangle = b_i$ for all $i \in [m]$, and $\mathcal{L}\subseteq \mathcal{V}$ is the nullspace of the linear operator $A$,
 where $A(X) = \left(\langle A_i,X\rangle\right)_{i=1}^m$. The objective function is given using the inner product $\langle \cdot,\cdot\rangle$ of $\mathcal{V}$, with which one defines the \emph{dual cone} as:
\begin{equation*}
  \mathcal{K}^*\coloneqq \{Y\in \mathcal{V}\enspace\vert\enspace \langle X,Y\rangle\geq 0\enspace\forall X\in\mathcal{K}\}.
\end{equation*}
In this paper, we will mostly deal with the case where $\mathcal{V}$ is the space $\mathbb{S}^n$ of $n\times n$ symmetric matrices equipped with the Euclidean inner product, and where $\mathcal{K}$ is the cone of doubly nonnegative matrices.

\subsection{Constraint Set Invariance}
Parrilo and Permenter \cite{permenter2016dimension} introduced a set of three conditions a subspace has to fulfill,
such that it is be possible to use it for symmetry reduction. Here we  revisit some of their results.
\begin{defn}
  A \emph{projection} is a linear transformation $P\colon \mathcal{V}\to\mathcal{V}$ which is \emph{idempotent}, i.e. $P^2=P$.
\end{defn}

\begin{defn}[Definition 2.1. in \cite{permenter2016dimension}] A projection $P\colon \mathcal{V}\to\mathcal{V}$ fulfills the \emph{Constraint Set Invariance Conditions (CSICs)} for $(\mathcal{K},X_0+\mathcal{L},C)$ if
\begin{itemize}
  \item[(i)] The projection is positive: $P(\mathcal{K})\subseteq \mathcal{K}$,
  \item[(ii)] $P(X_0+\mathcal{L})\subseteq X_0+\mathcal{L}$,
  \item[(iii)] $P^*(C+\mathcal{L}^\perp)\subseteq C+\mathcal{L}^\perp$,
\end{itemize}
where $P^*$ is the adjoint of $P$, which satisfies $\langle P(X),Y\rangle = \langle X, P^*(Y)\rangle$ for all $X,Y\in \mathcal{V}$.
\end{defn}

Note that this definition is symmetric going from primal to dual, since
\begin{equation*}
  P(\mathcal{K})\subseteq \mathcal{K} \quad\Leftrightarrow\quad P^*(\mathcal{K}^*)\subseteq \mathcal{K}^*.
\end{equation*}

These projections send feasible solutions to feasible solutions with the same objective value, as the next result shows.
\begin{prop}[Proposition 1.4.1 in \cite{permenter2017reduction}]\label{PropertiesCSIC}
  If a projection $P\colon \mathcal{V}\to\mathcal{V}$ fulfills the CSICs, then
  \begin{itemize}
    \item $P((X_0+\mathcal{L})\cap \mathcal{K})\subseteq (X_0+\mathcal{L})\cap\mathcal{K}$,
    \item $P^*((C+\mathcal{L}^\perp)\cap\mathcal{K}^*)\subseteq (C+\mathcal{L}^\perp)\cap\mathcal{K}^*$,
    \item For $X\in X_0+\mathcal{L}$:\quad $\langle C,X\rangle=\langle C,P(X)\rangle$,
    \item For $Y\in C+\mathcal{L}^\perp$:\quad$\langle X_0, Y\rangle=\langle X_0, P^*(Y)\rangle$.
  \end{itemize}
\end{prop}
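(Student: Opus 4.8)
The plan is to split the four claims into two groups. The first two are pure membership statements and follow by direct element-chasing from the CSICs together with the primal-dual symmetry noted just before the proposition; the last two carry the real content. For the first bullet I would take $X\in(X_0+\mathcal{L})\cap\mathcal{K}$, so that $X$ lies in both sets at once: condition (ii) gives $P(X)\in X_0+\mathcal{L}$ and condition (i) gives $P(X)\in\mathcal{K}$, so $P(X)$ lies in the intersection. The second bullet is the exact dual: for $Y\in(C+\mathcal{L}^\perp)\cap\mathcal{K}^*$, condition (iii) yields $P^*(Y)\in C+\mathcal{L}^\perp$, while the equivalence $P(\mathcal{K})\subseteq\mathcal{K}\Leftrightarrow P^*(\mathcal{K}^*)\subseteq\mathcal{K}^*$ stated above upgrades (i) into $P^*(\mathcal{K}^*)\subseteq\mathcal{K}^*$, giving $P^*(Y)\in\mathcal{K}^*$. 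No computation is needed for either.

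The third and fourth bullets are the substantive part, and the apparent obstacle is visible immediately: since $X$ and $P(X)$ both lie in $X_0+\mathcal{L}$ by (ii), their difference $X-P(X)$ lies in $\mathcal{L}$, so $\langle C,X\rangle-\langle C,P(X)\rangle=\langle C,X-P(X)\rangle$ pairs $C$ against an element of $\mathcal{L}$; but $C$ is not orthogonal to $\mathcal{L}$, so this does not vanish for the naive reason. The resolution is to combine idempotency with condition (iii). Applying (iii) to $C$ itself (legitimate since $0\in\mathcal{L}^\perp$) shows $P^*(C)-C\in\mathcal{L}^\perp$; because $X-P(X)\in\mathcal{L}$, pairing it against this element of $\mathcal{L}^\perp$ gives zero, so I may replace $C$ by $P^*(C)$ and move the adjoint across to obtain $\langle C,X-P(X)\rangle=\langle P^*(C),X-P(X)\rangle=\langle C,P(X-P(X))\rangle$. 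Finally $P^2=P$ forces $P(X-P(X))=P(X)-P^2(X)=0$, collapsing the expression to $\langle C,0\rangle=0$ and proving the third bullet.

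For the fourth bullet I would run the mirror-image argument, now swapping the roles of $\mathcal{L}$ and $\mathcal{L}^\perp$. Here $Y-P^*(Y)\in\mathcal{L}^\perp$ by (iii) and $P(X_0)-X_0\in\mathcal{L}$ by (ii), so pairing these across the inner product lets me swap $X_0$ for $P(X_0)$, after which $(P^*)^2=(P^2)^*=P^*$ gives $P^*\bigl(Y-P^*(Y)\bigr)=0$ and hence $\langle X_0,Y\rangle=\langle X_0,P^*(Y)\rangle$. The only step needing care in both of these bullets is the bookkeeping of which difference lives in $\mathcal{L}$ and which in $\mathcal{L}^\perp$; once that is set up correctly, idempotency does all the remaining work.
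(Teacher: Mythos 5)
Your proof is correct. The paper itself gives no proof of this proposition --- it is quoted verbatim from Permenter's thesis (Proposition 1.4.1) --- so there is nothing to compare against line by line, but your argument is exactly the standard one: the first two bullets by element-chasing with (i)--(iii) and the primal--dual symmetry of (i), and the last two by observing that $X-P(X)\in\mathcal{L}$ pairs to zero against $P^*(C)-C\in\mathcal{L}^\perp$ (and dually), after which idempotency of $P$ kills the remaining term. All the small legitimacy checks you flag ($0\in\mathcal{L}$, $0\in\mathcal{L}^\perp$, $(P^*)^2=P^*$) are handled correctly.
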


To make things easier, we restrict ourselves to \emph{orthogonal} projections $P_S$ to a subspace $S\subseteq\mathcal{V}$,
 which are exactly the projections of which the range and kernel are orthogonal,
 or equivalently the projections which are self-adjoint, i.e. $P_S=P_S^*$.
  If the projection $P_S$ fulfills the CSICs we call the subspace $S$ \emph{admissible}, following \cite{permenter2016dimension}.
In this case, the CSICs may be rewritten as follows.

\begin{thm}[Theorem 5.2.4 in \cite{permenter2017reduction}]
\label{thm:equivalent CSICs}
Consider the conic optimization problem \eqref{def:conic opt problem} and let $S \subseteq \mathcal{V}$ be the range
of an orthogonal projection $P_S:\mathcal{V} \rightarrow \mathcal{V}$.
Let $P_{\mathcal{L}}$ denote the orthogonal projection onto $\mathcal{L}$, etc., and define
$
C_{\mathcal{L}} = P_{\mathcal{L}}(C)
$
and $X_{0, \mathcal{L}^\perp} = P_{\mathcal{L}^\perp}(X_0)$.
Then $S$ is an admissible subspace if, and only if,
 \begin{itemize}
      \item[(a)] $C_\mathcal{L},X_{0,\mathcal{L}^\perp} \in S$,
     \item[(b)] $P_\mathcal{L}(S) \subseteq S$,
      \item[(c)]$P_S(\mathcal{K}) \subseteq \mathcal{K}$.
 \end{itemize}
\end{thm}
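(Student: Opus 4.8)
The plan is to exploit the self-adjointness of the orthogonal projection, $P_S = P_S^*$, in order to turn the three CSICs into statements purely about invariant subspaces, and then read off conditions (a)--(c). Condition (i) of the CSICs, $P_S(\mathcal{K}) \subseteq \mathcal{K}$, is literally condition (c), so there is nothing to prove there; the content lies in showing that conditions (ii) and (iii) together are equivalent to (a) together with (b).

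First I would rewrite condition (ii). Splitting an arbitrary element of $X_0 + \mathcal{L}$ as $X_0 + \ell$ with $\ell \in \mathcal{L}$ and using linearity of $P_S$, the requirement $P_S(X_0 + \mathcal{L}) \subseteq X_0 + \mathcal{L}$ decouples (take $\ell = 0$ first, then subtract) into the two conditions $P_S(X_0) - X_0 \in \mathcal{L}$ and $P_S(\mathcal{L}) \subseteq \mathcal{L}$. Analogously, since $P^* = P_S$, condition (iii) reads $P_S(C + \mathcal{L}^\perp) \subseteq C + \mathcal{L}^\perp$ and decouples into $P_S(C) - C \in \mathcal{L}^\perp$ and $P_S(\mathcal{L}^\perp) \subseteq \mathcal{L}^\perp$.

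The key step is a lemma on orthogonal projections: for the self-adjoint $P_S$ and $P_\mathcal{L}$ one has $P_S(\mathcal{L}) \subseteq \mathcal{L} \Leftrightarrow P_S(\mathcal{L}^\perp) \subseteq \mathcal{L}^\perp \Leftrightarrow P_S P_\mathcal{L} = P_\mathcal{L} P_S \Leftrightarrow P_\mathcal{L}(S) \subseteq S$. The implication that invariance of $\mathcal{L}$ forces invariance of $\mathcal{L}^\perp$ is the standard fact that the orthogonal complement of a subspace invariant under a self-adjoint map is again invariant. The passage to commuting projections follows by noting that $P_S(\mathcal{L}) \subseteq \mathcal{L}$ is equivalent to $P_\mathcal{L} P_S P_\mathcal{L} = P_S P_\mathcal{L}$ and then taking adjoints, which yields $P_\mathcal{L} P_S P_\mathcal{L} = P_\mathcal{L} P_S$ and hence $P_S P_\mathcal{L} = P_\mathcal{L} P_S$; the symmetry of this commuting relation in $S$ and $\mathcal{L}$ then gives $P_\mathcal{L}(S) \subseteq S$. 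This lemma simultaneously collapses the two invariance requirements coming from (ii) and (iii) into the single condition (b). I expect this equivalence to be the main obstacle, since it is where orthogonality of the projection is genuinely used and where the primal--dual symmetry of the construction is made precise.

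Finally I would dispatch the affine conditions. Using that $P_S$ commutes with $P_\mathcal{L}$, hence with $P_{\mathcal{L}^\perp} = I - P_\mathcal{L}$, once (b) holds, I would rewrite $P_S(X_0) - X_0 \in \mathcal{L}$ as $P_{\mathcal{L}^\perp} P_S(X_0) = X_{0,\mathcal{L}^\perp}$, push the commuting projection through to obtain $P_S(X_{0,\mathcal{L}^\perp}) = X_{0,\mathcal{L}^\perp}$, and conclude $X_{0,\mathcal{L}^\perp} \in S$. The same manipulation applied to $P_S(C) - C \in \mathcal{L}^\perp$ yields $P_S(C_\mathcal{L}) = C_\mathcal{L}$, that is $C_\mathcal{L} \in S$. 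Assembling the pieces, condition (ii) becomes the pair $X_{0,\mathcal{L}^\perp} \in S$ and (b), condition (iii) becomes the pair $C_\mathcal{L} \in S$ and (b), and together with the identity of (i) and (c) this is exactly (a), (b), (c), establishing both directions of the equivalence at once.
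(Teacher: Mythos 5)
Your argument is correct. Note that the paper itself does not prove this statement --- it is quoted verbatim from Permenter's thesis (Theorem 5.2.4 in \cite{permenter2017reduction}) --- so there is no in-paper proof to compare against; judged on its own, your derivation is complete and each step is a genuine equivalence, which is what lets you conclude both directions simultaneously. The decomposition of the affine conditions (ii) and (iii) into a translation part plus a subspace-invariance part, the observation that self-adjointness of $P_S$ makes invariance of $\mathcal{L}$, invariance of $\mathcal{L}^\perp$, commutation $P_SP_\mathcal{L}=P_\mathcal{L}P_S$, and $P_\mathcal{L}(S)\subseteq S$ all equivalent, and the final rewriting $P_{\mathcal{L}^\perp}P_S(X_0)=P_S(X_{0,\mathcal{L}^\perp})$ via commutation are exactly the ingredients of the standard proof, and they are all justified correctly here (in particular the adjoint computation $P_\mathcal{L}P_SP_\mathcal{L}=P_SP_\mathcal{L}$ implying commutativity, and the fact that an orthogonal projection fixes precisely its range, which converts $P_S(X_{0,\mathcal{L}^\perp})=X_{0,\mathcal{L}^\perp}$ into $X_{0,\mathcal{L}^\perp}\in S$). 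No gaps.
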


Restricting the conic program to an admissible subspace $S$ thus results in another, potentially significantly smaller program, with the same optimal value.
\begin{align*}
 \inf\enspace&\langle P_S(C),X\rangle \\
  \mathrm{s.t.}\enspace & X\in P_S(X_0)+\mathcal{L}\cap S, \\
  & X\in \mathcal{K}\cap S.
\end{align*}

\subsection{The reduction for Jordan-Algebras}
Next we review some results from \cite{permenter2017reduction} for the situation where the  space $\mathcal{V}$ is an \emph{Euclidian Jordan algebra} $\mathcal{J}$,
that is a commutative algebra (with product denoted by `$\circ$') over $\mathbb{R}$ satisfying the \emph{Jordan identity}
\begin{equation*}
  (x\circ y)\circ x^2=x\circ (y\circ x^2),
\end{equation*}
and an inner product with $\langle x\circ y,z\rangle=\langle y,x\circ z\rangle$.
For every such algebra we can define $\mathcal{K}$ as the cone of squares of $\mathcal{J}$ given by $\mathcal{K}=\{x\circ x\enspace\vert\enspace x\in \mathcal{J}\}$,
which always is a symmetric cone, i.e. a self-dual and homogenous convex cone (see for example \cite{Faraut1994AnalysisOS}).

The only example relevant for us is the case $\mathcal{J}={\mathbb{S}}^n$, the  symmetric $n\times n$-matrices with real entries, with product defined by
\begin{equation*}
  X\circ Y \coloneqq \frac{1}{2}(XY+YX),
\end{equation*}
and the inner product the Euclidean (trace) inner product $\langle X,Y \rangle = \mbox{trace}(XY)$.
It is easy to see (e.g. from the spectral decomposition) that its cone of squares is exactly the positive semidefinite cone ${\mathbb{S}}^n_+$.

Since the product of a Jordan algebra is commutative, we have
\begin{equation*}
  2x\circ y = x\circ y + y \circ x = (x+y)^2 - x^2 - y^2,
\end{equation*}
which means that subspaces are closed under multiplication, if and only if they include all squares. Similarly isomorphisms between (euclidian) Jordan algebras are exactly the bijective linear maps satisfying $\phi(x^2)=(\phi(x))^2$.

\begin{defn}
A Jordan algebra $\mathcal{J}$ is called \emph{special}, if it is isomorphic to  the algebra one gets from a real associative algebra  by equipping the latter with the product $x \circ y = \frac{1}{2}(xy+yx)$.
\end{defn}
There is only a single (up to isomorphisms) simple Jordan algebra which is not special, the algebra of Hermitian $3\times 3$-matrices of Octonions $H_3(\mathbb{O})$. The for us relevant case $\mathcal{J}={\mathbb{S}}^n$ is special.

\begin{defn}A subspace (not necessarily a subalgebra) $S$ of a Jordan algebra is called \emph{unital}, if there is an an element $e\in S$ such that $e\circ a=a\circ e=a$ for all $a\in S$.
\end{defn}
An important fact for us is that every Euclidean Jordan algebra is unital.

One main result of \cite{permenter2017reduction} is an alternative description of the CSICs when the ambient space is a special Euclidean Jordan algebra.
In this case the condition $P_S(\mathcal{K}) \subseteq \mathcal{K}$ in Theorem \ref{thm:equivalent CSICs} --- with $\mathcal{K}$ the
cone of squares in $\mathcal{\mathcal{J}}$ --- is equivalent to $S$ being closed under taking squares,
 i.e.\ to $S$ being a Jordan sub-algebra
of $\mathcal{\mathcal{J}}$.

This gives an algorithm for finding the minimal admissible subspace, which is defined as follows.
\begin{defn}The unique minimal admissible subspace is
  \begin{equation*}
    S_{\min}\coloneqq \bigcap_{S \text{ is admissible}} S.
  \end{equation*}
\end{defn}

As mentioned before, we may now formulate an algorithm for $S_{\min}$. 
\begin{thm}[Theorem 3.2 in \cite{permenter2016dimension}]
  If $\mathcal{V}=\mathcal{J}$ is an Euclidian, special Jordan algebra, then $S_{\min}$ is the output of Algorithm \ref{AlgoSMinSpecial}.

  \begin{algorithm}[H]
  \caption{Finding $S_{\min}$}
  \label{AlgoSMinSpecial}
    $S \gets \mathrm{span}\{C_\mathcal{L},X_{0,\mathcal{L}^\perp}\}$\\
    \Repeat{converged}{
        $S\gets S+P_{\mathcal{L}}(S)$\\
        $S\gets S+\mathrm{span}\{X^2\enspace\vert\enspace X\in S\}$
    }
  \end{algorithm}
\end{thm}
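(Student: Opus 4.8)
The plan is to prove the two inclusions $S^{*}\subseteq S_{\min}$ and $S_{\min}\subseteq S^{*}$, where $S^{*}$ denotes the output of Algorithm~\ref{AlgoSMinSpecial}. Throughout I would use the characterization of admissibility from Theorem~\ref{thm:equivalent CSICs}, together with the fact --- valid precisely because $\mathcal{J}$ is a special Euclidean Jordan algebra --- that condition (c), $P_S(\mathcal{K})\subseteq\mathcal{K}$, is equivalent to $S$ being a Jordan subalgebra, i.e.\ to $S$ being closed under squaring. This reformulation is what makes the whole argument go through, because closure under squaring, unlike the geometric condition $P_S(\mathcal{K})\subseteq\mathcal{K}$, behaves well under intersections and is exactly the invariant the algorithm enforces.

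First I would verify that $S_{\min}$ is itself admissible, so that it is genuinely the least admissible subspace and not merely an intersection. Conditions (a) and (b) pass to intersections immediately: every admissible subspace contains $C_{\mathcal{L}}$ and $X_{0,\mathcal{L}^\perp}$, hence so does their intersection; and if $P_{\mathcal{L}}(S)\subseteq S$ for each admissible $S$, then $P_{\mathcal{L}}\bigl(\bigcap S\bigr)\subseteq\bigcap S$. For condition (c) I would invoke the subalgebra reformulation, since an intersection of Jordan subalgebras is again a Jordan subalgebra. This is the one place where the special Jordan structure is essential, and it is what guarantees that $S_{\min}$ is well defined.

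Next I would show that $S^{*}$ is admissible, which gives $S_{\min}\subseteq S^{*}$. Since the maintained subspace only grows and lives in the finite-dimensional space $\mathcal{V}$, its dimension is nondecreasing and bounded, so the loop terminates. Because each line only adds to $S$, at the fixed point neither line enlarges it, so $P_{\mathcal{L}}(S^{*})\subseteq S^{*}$ (condition (b)) and $X^{2}\in S^{*}$ for all $X\in S^{*}$, i.e.\ $S^{*}$ is a subalgebra (condition (c)). Condition (a) holds because the initialization places $C_{\mathcal{L}}$ and $X_{0,\mathcal{L}^\perp}$ into $S$ and the subspace never shrinks. Hence $S^{*}$ is admissible and therefore contains $S_{\min}$.

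Finally I would prove the reverse inclusion $S^{*}\subseteq S_{\min}$ by showing that the iterate $S$ stays inside an \emph{arbitrary} admissible subspace $S'$ throughout the run; intersecting over all such $S'$ then yields $S^{*}\subseteq S_{\min}$. The argument is an induction on the loop. Initially $S=\mathrm{span}\{C_{\mathcal{L}},X_{0,\mathcal{L}^\perp}\}\subseteq S'$ by condition (a) for $S'$. For the inductive step, if $S\subseteq S'$ then $P_{\mathcal{L}}(S)\subseteq P_{\mathcal{L}}(S')\subseteq S'$ by condition (b) for $S'$, so the first line preserves $S\subseteq S'$; and if $S\subseteq S'$ then $X\in S'$ and hence $X^{2}\in S'$ for every $X\in S$, because $S'$ is a subalgebra by condition (c), so the second line also preserves $S\subseteq S'$. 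Combining both inclusions gives $S^{*}=S_{\min}$. The only genuinely delicate point, as flagged above, is the passage through the subalgebra reformulation of condition (c); once that equivalence is in hand, the rest is the monotone fixed-point bookkeeping typical of closure algorithms.
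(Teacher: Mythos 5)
The paper does not actually prove this theorem --- it is quoted from Parrilo and Permenter --- so your argument has to stand on its own. The overall structure (show the algorithm terminates at an admissible subspace, and show by induction on the loop that the iterate stays inside every admissible subspace) is the right one and is the standard closure-operator proof. However, the point you yourself flag as ``the only genuinely delicate point'' is a genuine gap, in both places where you use it. The implication needed in your final induction step is: for an \emph{arbitrary} admissible $S'$, condition (c) of Theorem~\ref{thm:equivalent CSICs}, $P_{S'}(\mathcal{K})\subseteq\mathcal{K}$, forces $S'$ to be closed under squares. That implication is only valid for \emph{unital} subspaces (this is Lemma 5.2.2 in Permenter's thesis, and the paper itself invokes unitality whenever it uses this direction elsewhere), and an admissible subspace in the sense of the CSICs need not be unital. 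Concretely, in $\mathcal{J}=\mathbb{S}^2$ take $\mathcal{L}=\{0\}$, $C=0$ and $X_0=\mathrm{diag}(1,2)$; then $S'=\mathrm{span}\{\mathrm{diag}(1,2)\}$ satisfies (a), (b) and $P_{S'}(\mathbb{S}^2_+)\subseteq\mathbb{S}^2_+$ (the projection is $X\mapsto \tfrac{1}{5}(X_{11}+2X_{22})\,\mathrm{diag}(1,2)$, and $\mathrm{diag}(1,2)\succcurlyeq 0$), yet it is not closed under squaring, and Algorithm~\ref{AlgoSMinSpecial} outputs the strictly larger space of all diagonal matrices. So with $S_{\min}$ defined as the bare intersection of all CSIC-admissible subspaces, the inclusion $S^{*}\subseteq S_{\min}$ fails in general; the same example shows that your first step (``$S_{\min}$ is admissible because condition (c) passes to intersections via the subalgebra reformulation'') is also not justified, since it uses the same unjustified direction of the equivalence for each admissible $S$.

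The repair is to work with the notion of admissibility actually used in the cited source in the Jordan-algebraic setting: subspaces satisfying (a), (b) and \emph{being a Jordan subalgebra} (equivalently, unital subspaces satisfying (c); note that a Jordan subalgebra of a Euclidean Jordan algebra is automatically unital, so the converse direction of the equivalence is safe). For that class your argument goes through verbatim: subalgebras are closed under intersection, the fixed point of the algorithm is a subalgebra containing $C_{\mathcal{L}}$ and $X_{0,\mathcal{L}^\perp}$ and invariant under $P_{\mathcal{L}}$, and the induction step only needs that $S'$ is a subalgebra. The conclusion is then that the algorithm returns the minimal admissible \emph{subalgebra}; the paper's informal assertion that condition (c) is equivalent to closure under squares silently assumes unitality, and your proof inherits that imprecision at exactly the step where it matters.
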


 \subsection{A combinatorial reduction algorithm}
  \label{sec:combreduc}
    The fourth step of Algorithm \ref{AlgoSMinSpecial} is not linear, and  hard to implement.
     But, conveniently, Permenter does introduce three combinatorial algorithms in his PhD thesis (\cite{permenter2017reduction}, Chapter 7) for the cone ${\mathbb{S}}^n_+$, which all find orthogonal $0/1$-bases for an optimal unital admissible subspace with certain additional properties.  Here we will only mention one of the algorithms, since the other ones cannot give us better reductions for our special case.

\subsubsection*{Partition subspaces}
    The second combinatorial algorithm by Permenter \cite{permenter2017reduction} finds an optimal unital \emph{partition subspace}, which is a subspace with $0/1$-basis,
    the elements of which sum to the all-one matrix. We can describe the basis uniquely with a partition of the coordinates of ${\mathbb{S}}^n$, i.e.\ of $[n] \times [n]$,
     simply by having one part for every basis element with ones in the corresponding coordinates. For example the following spaces are partition spaces:
    \begin{align*}
      P_1 &= \begin{pmatrix}
               a & a & b \\
               a & a & b \\
               b & b & c \\
             \end{pmatrix},
    & P_2 &= \begin{pmatrix}
               a & b & b \\
               b & a & b \\
               b & b & c \\
             \end{pmatrix},
    & P_3 = P_1 \wedge P_2 &= \begin{pmatrix}
               a & b & c \\
               b & a & c \\
               c & c & d \\
             \end{pmatrix},
    \end{align*}
    where $P_1 \wedge P_2$ is the coarsest partition space \emph{refining} both $P_1$ and $P_2$.

    For our purposes an important special case is a so-called Jordan configuration, defined as follows.

    \begin{defn}
      A partition $P$ of $A\times A$, where $A$ is a finite set, is called a \emph{Jordan configuration}, if its characteristic matrices $\mathcal{B}_P$ satisfy
      \begin{itemize}
        \item $X=X^T$ for all $X\in \mathcal{B}_P$,
        \item $XY+YX\in \mathrm{span} \enspace\mathcal{B}_P$ for all $X,Y\in \mathcal{B}_P$,
        \item $I\in \mathrm{span} \enspace\mathcal{B}_P$.
      \end{itemize}
    \end{defn}
   In words, a Jordan configuration is a basis of a unital partition space that is also a Jordan subalgebra of ${\mathbb{S}}^n$.

A more general example of a partition space, also of interest to us, is a so-called coherent algebra.
 \begin{defn}
      A partition $P$ of $A\times A$, where $A$ is a finite set, is called a \emph{coherent configuration}, if its characteristic matrices $\mathcal{B}_P$ satisfy
      \begin{itemize}
        \item If $X\in \mathcal{B}_P$ then also $X^T\in \mathcal{B}_P$,
        \item $XY \in \mathrm{span} \enspace\mathcal{B}_P$ for all $X,Y\in \mathcal{B}_P$,
        \item $I\in \mathrm{span} \enspace\mathcal{B}_P$.
      \end{itemize}
    \end{defn}
    Thus, a coherent configuration gives a 0/1 basis of a partition subspace that is also a matrix $*$-algebra, namely the associated coherent algebra.
Note that the symmetric part of a coherent configuration is a Jordan configuration. It is an open question if the converse is also true \cite{Cameron}, \cite[p. 218]{permenter2017reduction}.

    To restrict the algorithm \ref{AlgoSMinSpecial} to partition subspaces, we need more notation: $\mathrm{part}(A)$ is the smallest
     partition space containing the matrix (or subspace) $A$, which is simply the partition space given by the unique entries of $A$.

    \begin{algorithm}
    \caption{Partition algorithm}
    $P \gets \mathrm{part}(C_\mathcal{L})\wedge \mathrm{part}(X_{0,\mathcal{L}^\perp})$\\
    \Repeat{converged}{
        $P\gets P\wedge\mathrm{part}(P_{\mathcal{L}}(P))$\\
        $P\gets P\wedge\mathrm{part}(\mathrm{span}\{X^2\enspace\vert\enspace X\in P\})$
    }
    \end{algorithm}

    There are two basic ways to implement this algorithm: One can use polynomial matrices, or randomization.
    For the first variant one introduces (commuting) variables $t_i$ for each element of a basis $B_1,\ldots, B_k$ of $P$,
    and then refines the partition with $\mathrm{part}(P_{\mathcal{L}}(\sum_{i=1}^{k}t_iB_i))= \mathrm{part}(\sum_{i=1}^{k}t_iP_{\mathcal{L}}(B_i))$ and
     $\mathrm{part}((\sum_{i=1}^{k}t_iB_i)^2)$. If we for example take $P_2$ from the example above, one has
     \begin{align*}
      \begin{pmatrix}
               t_a & t_b & t_b \\
               t_b & t_a & t_b \\
               t_b & t_b & t_c \\
             \end{pmatrix}^2
       = \begin{pmatrix}
               t_a^2+2t_b^2 & 2t_at_b+t_b^2 & t_at_b+t_b^2+t_bt_c \\
               2t_at_b+t_b^2 & t_a^2+2t_b^2 & t_at_b+t_b^2+t_bt_c \\
               t_at_b+t_b^2+t_bt_c & t_at_b+t_b^2+t_bt_c & 2t_b^2+t_c^2 \\
             \end{pmatrix},
    \end{align*}
    of which the unique polynomials induce the partition $P_3$.

    The second variant refines the partition with a random element in the partition space after projecting it to $\mathcal{L}$ and after squaring it.
    While one has to be more careful about rounding errors here, it is both easier to implement and much faster.

    \begin{algorithm}
        \caption{Partition algorithm, randomized \label{alg:Partitionrandomized}}
    $P \gets \mathrm{part}(C_\mathcal{L})\wedge \mathrm{part}(X_{0,\mathcal{L}^\perp})$\\
    \Repeat{converged}{
        $X\gets \text{random element of } P$\\
        $P\gets P\wedge\mathrm{part}(P_{\mathcal{L}}(X))$\\
        $P\gets P\wedge\mathrm{part}(X^2)$
    }
    \end{algorithm}


\begin{rem}
We note that the first variant of the partition algorithm presented here is very similar to the Weisfeiler-Leman (WL) algorithm \cite{WL}, that finds the coarsest
 coherent configuration
refining a given partition of $[n] \times [n]$. The only difference is that the  WL algorithm uses non-commuting variables $t_i$, as opposed to commuting ones; see \cite{Babel_et_al_WL}
on details of the implementation of the WL algorithm.
\end{rem}

\section{Extension to the doubly nonnegative cone}
\label{sec:extenD}
We will now fix the cone $\mathcal{K}$ in \eqref{def:conic opt problem} to be the \emph{doubly nonnegative cone} ${\mathcal{D}}^n\coloneqq {\mathbb{S}}^n_+ \cap \mathbb{R}^{n\times n}_+$.
Since we will refer to nonnegative, symmetric matrices frequently, we also introduce the notation $\mathcal{N}^n = {\mathbb{S}}^n \cap \mathbb{R}^{n\times n}_+$.
Even though ${\mathcal{D}}^n$ is not a cone of squares in a Euclidean Jordan algebra, one may
readily adapt some of the results of the last section to this setting.

We start with an elementary, but important observation.
\begin{prop}\label{SnPositiveToDn}
  Assume that a subspace
  $S \subset \mathbb{S}^n$
  has a basis of nonnegative matrices with pairwise disjoint supports.
   Then the orthogonal projection $P_S$ onto $S$ satisfies
     $P_S({\mathcal{D}}^n)\subseteq {D}^n$ if it satisfies
     $P_S({\mathbb{S}}^n_+)\subseteq {\mathbb{S}}^n_+$.
    \end{prop}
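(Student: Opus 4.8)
The plan is to reduce both defining properties of $\mathcal{D}^n$ --- positive semidefiniteness and entrywise nonnegativity --- to statements about the explicit form of $P_S$. The first observation I would make is that the hypothesis of pairwise disjoint supports is exactly what guarantees that the given nonnegative basis $\{B_1,\dots,B_k\}$ of $S$ is orthogonal with respect to the trace inner product: for $i\neq j$ we have $\langle B_i,B_j\rangle=\sum_{a,b}(B_i)_{ab}(B_j)_{ab}$, and disjointness of supports forces every summand to vanish. Consequently the orthogonal projection admits the simple diagonal formula
\begin{equation*}
  P_S(X)=\sum_{i=1}^k \frac{\langle X,B_i\rangle}{\langle B_i,B_i\rangle}\,B_i .
\end{equation*}

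Next I would split the claim $P_S(X)\in\mathcal{D}^n$ for $X\in\mathcal{D}^n$ according to the decomposition $\mathcal{D}^n=\mathbb{S}^n_+\cap\mathcal{N}^n$. The semidefinite part is immediate from the assumption: since $X\in\mathcal{D}^n\subseteq\mathbb{S}^n_+$ and $P_S(\mathbb{S}^n_+)\subseteq\mathbb{S}^n_+$ by hypothesis, we get $P_S(X)\in\mathbb{S}^n_+$. For the entrywise part I would argue directly from the projection formula: each coefficient $\langle X,B_i\rangle=\sum_{a,b}X_{ab}(B_i)_{ab}$ is a sum of products of nonnegative reals, hence nonnegative, and $\langle B_i,B_i\rangle>0$, so every coefficient in the sum is nonnegative. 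Thus $P_S(X)$ is a nonnegative linear combination of the entrywise-nonnegative matrices $B_i$, and is therefore itself entrywise nonnegative, i.e.\ lies in $\mathcal{N}^n$. Combining the two conditions gives $P_S(X)\in\mathbb{S}^n_+\cap\mathcal{N}^n=\mathcal{D}^n$.

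There is essentially no hard step here; the content lies entirely in the interplay between the two hypotheses. The only point that requires care --- and the reason the disjoint-support assumption cannot be dropped --- is that the nonnegativity of the coefficients relies on the projection being a \emph{literal} nonnegative combination of the $B_i$. For a general (non-orthogonal) nonnegative basis the coefficients would be obtained by applying the inverse Gram matrix to the nonnegative vector $(\langle X,B_i\rangle)_i$, and this inverse may have negative entries, destroying entrywise nonnegativity. I would therefore emphasize that disjoint supports serve the dual role of making the basis orthogonal (so the projection is the simple sum above) and of keeping each coefficient nonnegative. No compactness, duality, or spectral argument is needed beyond the hypothesis $P_S(\mathbb{S}^n_+)\subseteq\mathbb{S}^n_+$, which is used verbatim for the semidefinite part.
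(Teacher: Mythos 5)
Your proposal is correct and follows essentially the same route as the paper: both use the disjoint-support hypothesis to obtain an orthogonal nonnegative basis, write $P_S$ as the resulting sum of nonnegative inner products against nonnegative basis matrices to conclude $P_S(\mathcal{N}^n)\subseteq\mathcal{N}^n$, and combine this with the assumed $\mathbb{S}^n_+$-positivity via $\mathcal{D}^n=\mathbb{S}^n_+\cap\mathcal{N}^n$. Your closing remark on why a non-orthogonal nonnegative basis would fail (negative entries of the inverse Gram matrix) is a useful clarification but does not change the argument.
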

\begin{proof}
If $S$ has a basis of nonnegative matrices with disjoint supports, then it has an orthonormal basis with this property, say $A_i$ $(i\in [d])$, and the orthogonal projection is of the form
  \begin{equation*}
    P_S(X)=\sum_{i=1}^d \langle A_i,X\rangle A_i.
  \end{equation*}
   Since the Euclidean inner product of two nonnegative matrices is nonnegative, we have
  \begin{equation*}
    P_S(\mathcal{N}^n)\subseteq \mathcal{N}^n,
  \end{equation*}
  and, since $D_n \subset \mathbb{S}^n_+$, and  $P_S({\mathbb{S}}^n_+)\subseteq {\mathbb{S}}^n_+$ by assumption,
  \begin{equation*}
    P_S({\mathcal{D}}^n) \subseteq {\mathbb{S}}^n_+\cap \mathcal{N}^n={\mathcal{D}}^n.
  \end{equation*}
\end{proof}

If we consider partition subspaces, we may therefore use results on admissible partition subspaces for the case $\mathcal{K} = \mathbb{S}^n_+$, as follows.

\begin{cor}
Consider a conic optimization problem of the form \eqref{def:conic opt problem}, with $\mathcal{V }= \mathbb{S}^n$, and $\mathcal{K} = \mathbb{S}^n_+$, and let $S$ be a admissible partition subspace for this problem.
Then, $S$ is also an admissible partition subspace for the related problem where we replace $\mathcal{K} = \mathbb{S}^n_+$ by $\mathcal{K} = {\mathcal{D}}^n$.
\end{cor}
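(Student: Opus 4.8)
The plan is to reduce everything to Theorem~\ref{thm:equivalent CSICs}, exploiting the fact that the two conic problems under consideration differ \emph{only} in the cone $\mathcal{K}$, while sharing the same data $C$, $X_0$, and subspace $\mathcal{L}$. Of the three equivalent CSICs listed in that theorem, conditions (a) $C_\mathcal{L}, X_{0,\mathcal{L}^\perp}\in S$ and (b) $P_\mathcal{L}(S)\subseteq S$ make no reference to $\mathcal{K}$ whatsoever. Since $S$ is assumed admissible for the problem with $\mathcal{K}=\mathbb{S}^n_+$, conditions (a) and (b) already hold, and they carry over verbatim to the problem with $\mathcal{K}=\mathcal{D}^n$. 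Thus the only item left to verify is condition (c), namely $P_S(\mathcal{D}^n)\subseteq \mathcal{D}^n$.

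The key observation -- and precisely the reason the hypothesis that $S$ be a \emph{partition} subspace is what is needed -- is that such a subspace comes equipped with its canonical $0/1$-basis given by the characteristic matrices of the parts of the underlying partition of $[n]\times[n]$. These matrices are nonnegative, and because they arise from a genuine partition, their supports are pairwise disjoint. Hence $S$ automatically satisfies the hypothesis of Proposition~\ref{SnPositiveToDn}.

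With this in hand I would simply invoke Proposition~\ref{SnPositiveToDn}: condition (c) for the $\mathbb{S}^n_+$-problem reads $P_S(\mathbb{S}^n_+)\subseteq \mathbb{S}^n_+$, which by the proposition -- applied to the disjoint-support nonnegative basis just described -- yields $P_S(\mathcal{D}^n)\subseteq \mathcal{D}^n$. This establishes (c) for the $\mathcal{D}^n$-problem. Having verified (a), (b), and (c), Theorem~\ref{thm:equivalent CSICs} gives that $S$ is admissible for the doubly nonnegative problem; and $S$ is of course still a partition subspace, which completes the argument.

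There is no genuine analytic obstacle here: the entire content lies in the bookkeeping of which CSIC depends on the cone. The one point requiring a moment's care is to make explicit that a partition subspace always possesses a nonnegative basis with pairwise disjoint supports, since this is exactly the bridge that lets Proposition~\ref{SnPositiveToDn} transfer positivity of the projection from $\mathbb{S}^n_+$ to $\mathcal{D}^n$. Everything else -- the invariance of conditions (a) and (b) under changing $\mathcal{K}$ -- is immediate from the statement of Theorem~\ref{thm:equivalent CSICs}.
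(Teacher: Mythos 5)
Your proposal is correct and follows exactly the route the paper intends: the corollary is stated immediately after Proposition~\ref{SnPositiveToDn} with no separate proof, precisely because conditions (a) and (b) of Theorem~\ref{thm:equivalent CSICs} are independent of the cone and the $0/1$ characteristic matrices of a partition subspace form a nonnegative basis with pairwise disjoint supports, so Proposition~\ref{SnPositiveToDn} upgrades condition (c) from $\mathbb{S}^n_+$ to $\mathcal{D}^n$. Your write-up just makes this implicit argument explicit.
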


The important practical implication is that we may use Algorithm \ref{alg:Partitionrandomized} to find the minimal admissible Jordan configuration
for conic optimization problems on the cone ${\mathcal{D}}^n$. In the next section we will do precisely this for an SDP relaxation of the quadratic assignment problem.

It it instructive though, to ask how restrictive it is to only consider admissible partition subspaces.
In what follows, we show that, the partition subspace structure  is actually imposed by some relatively weak assumptions.

 To this end, we  recall
a result on nonnegative projection matrices, taken from \cite[Theorem 2.38]{projectors book}, but originally due to Belitskii and Lyubich (cf.\ \cite[p. 108]{Matrix Norms and their Applications}).

\begin{prop}[Theorem 2.1.11 in \cite{Matrix Norms and their Applications}]
\label{thm:nonnegative projection}
The general form of a nonnegative projection matrix is
\begin{equation}
\label{nonneg projection matrix}
  P = (A+B)C^T
\end{equation}
where $r = \mbox{rank}(P)$, $A,B,C \in \mathbb{R}_+^{n\times r}$, $A^TA = I$, $C^TA = I$, $B^TA = 0$ and $B^TC = 0$.
\end{prop}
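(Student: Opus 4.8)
The plan is to characterize the structure of a nonnegative projection matrix $P$ by exploiting the two defining properties simultaneously: idempotence ($P^2 = P$) and entrywise nonnegativity ($P \geq 0$). The idea is that any projection of rank $r$ admits a factored form $P = MN^T$ where $M, N \in \mathbb{R}^{n \times r}$ have their columns spanning the range and the row space respectively, and the idempotence $P^2 = P$ together with a normalization forces $N^T M = I$. What distinguishes the nonnegative case is that we must show $M$ can be split as $M = A + B$ with $A \geq 0$ supporting the ``identity-like'' interaction $N^T A = I$, $B \geq 0$ lying in the complementary directions $N^T B = 0$, while also arranging $A^T A = I$ and the orthogonality $B^T A = 0 = B^T C$ (writing $C$ for $N$).

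First I would reduce to the spectral picture of a general (possibly oblique) projection: $P$ has eigenvalues only $0$ and $1$, so $\mathcal{V}$ decomposes as $\mathrm{range}(P) \oplus \ker(P)$, and $P$ acts as the identity on its range. I would pick a basis of $\mathrm{range}(P)$ and write $P = MN^T$ with $N^T M = I_r$; this is the standard full-rank factorization of an idempotent. The substance of the theorem is to reconcile this with nonnegativity, and here the natural route is to use the Perron-Frobenius theory. Because $P \geq 0$ and $P = P^2$, the range of $P$ is a nonnegative-invariant subspace on which $P$ acts as the identity; one shows that $\mathrm{range}(P)$ has a nonnegative basis and moreover that the columns can be chosen with \emph{disjoint supports} after an appropriate normalization, yielding the orthonormal matrix $A$ with $A^T A = I$. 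The remaining nonnegative part $B = M - A$ is what is needed to absorb the off-support mass of the columns of $M$, and the conditions $N^T A = I$, $N^T B = 0$ record how $A$ carries the full ``diagonal'' action while $B$ sits in the kernel directions as seen through $N$.

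\textbf{The main obstacle} I expect is precisely the disjoint-support/idempotent-nonnegative normalization: showing that a nonnegative idempotent of rank $r$ can be brought, by choosing the right basis of its range, into a form where an explicitly nonnegative summand $A$ satisfies both $A^T A = I$ \emph{and} $C^T A = I$ simultaneously. This is where Perron-Frobenius really does the work, since it guarantees that the relevant invariant subspaces admit nonnegative spanning vectors, but one has to verify that these can be normalized to be orthonormal without destroying nonnegativity of the complementary matrix $B$. The conditions $B^T A = 0$ and $B^T C = 0$ should then follow by construction, essentially by defining $A$ as the support-restricted part of the column space and $B$ as the remainder.

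Since this is a verbatim citation of a known classical result (attributed to Belitskii and Lyubich), I would in practice defer to their proof rather than reconstruct it; but the conceptual skeleton above --- full-rank factorization of the idempotent, followed by a Perron-Frobenius normalization of the range into disjoint-support orthonormal directions --- is the route I would take to establish the stated canonical form.
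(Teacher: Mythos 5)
The paper does not prove this proposition at all: it is quoted verbatim from Belitskii--Lyubich (via Gal\'antai's book) and used as a black box, so your decision to defer to the cited source is exactly what the authors do, and there is no in-paper argument to compare against.

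That said, one step of your sketch would not survive being made precise. You claim that $\mathrm{range}(P)$ ``has a nonnegative basis and moreover the columns can be chosen with disjoint supports,'' but that is not what the theorem asserts and is false for general (non-symmetric) nonnegative idempotents: the range is spanned by the columns of $A+B$, and only the columns of $A$ are forced to have disjoint supports (as any nonnegative orthonormal system must); $B$ may smear arbitrary nonnegative mass across those supports subject to $B^TA=0$, $B^TC=0$. The disjoint-support conclusion for the \emph{range} is special to the symmetric case, which is precisely the content of the paper's Corollary~\ref{cor:nonnegative projection} (there $B$ is shown to vanish, via $P=P^T$). Relatedly, Perron--Frobenius is not really the engine here: it yields a single nonnegative eigenvector for the spectral radius, not an orthonormal nonnegative system spanning a rank-$r$ eigenspace. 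The actual proof proceeds by a support/block analysis of nonnegative idempotents (in the spirit of Flor's theorem), showing $P$ is permutation-similar to a block form built from positive rank-one idempotents $x_iy_i^T$ with $y_i^Tx_i=1$, from which the factors $A$, $B$, $C$ and the stated relations are read off. Your opening move (full-rank factorization $P=MN^T$ with $N^TM=I$) is sound and is indeed the starting point; it is the normalization step in the middle that needs the combinatorial argument rather than spectral theory.
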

As a consequence, a nonnegative projection matrix has the following structure.

\begin{cor}
\label{cor:nonnegative projection}
Any $n\times n$ symmetric nonnegative projection matrix $P$ with $r$-dimensional range takes the form $P = CC^T$ for some $C \in \mathbb{R}_+^{n\times r}$ such that $C^TC = I$.
In particular, the columns of $C$ form a nonnegative, orthonormal basis of the range of $P$, and these basis vectors therefore have disjoint supports.
\end{cor}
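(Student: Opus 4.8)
The plan is to specialize the general form of a nonnegative projection matrix from Proposition \ref{thm:nonnegative projection} to the symmetric case, and then extract the disjoint-support conclusion. We are given a symmetric nonnegative projection $P$ of rank $r$, so by Proposition \ref{thm:nonnegative projection} we may write $P = (A+B)C^T$ with $A,B,C \in \mathbb{R}_+^{n\times r}$ satisfying $A^TA = I$, $C^TA = I$, $B^TA = 0$, and $B^TC = 0$. The first task is to use the symmetry $P = P^T$ to pin down the free factors $A$, $B$, and $C$. The symmetry gives $(A+B)C^T = C(A+B)^T$, and I would feed this relation through the four orthogonality constraints to force $B = 0$ and $A = C$, which collapses the general form to $P = CC^T$.

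Concretely, I would first recover $C$ from $P$ using $C^TA = I$: multiplying $P = (A+B)C^T$ on the right by the appropriate factor and invoking $B^TA = 0$ together with $A^TA = I$ should let me isolate $C$ in terms of $P$ and $A$. The cleaner route is to compute $PA = (A+B)C^TA = (A+B)I = A+B$, using $C^TA = I$. Thus $A + B = PA$, so $P = PAC^T$. Since $P$ is symmetric, $P = P^T = C A^T P^T = C A^T P = C(PA)^T = C(A+B)^T$. Comparing $P = (A+B)C^T = C(A+B)^T$ and using the disjointness encoded in $B^TA = 0$ and $B^TC = 0$, I expect the nonnegativity of all matrices to force the ``cross'' block $B$ to vanish: each entry of $B$ contributes nonnegatively, and the orthogonality relations leave no room for a nonzero $B$ once symmetry is imposed. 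This then yields $A = C$ and hence $P = CC^T$ with $C^TC = C^TA = I$.

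Once $P = CC^T$ with $C^TC = I$ is established, the remaining claims are immediate. The identity $C^TC = I$ says exactly that the $r$ columns of $C$ are orthonormal, and since $P = CC^T$ has range equal to the column space of $C$, these columns form an orthonormal basis of the range of $P$. For the disjoint-support statement, I would observe that the columns $c_i$ of $C$ are entrywise nonnegative (as $C \in \mathbb{R}_+^{n\times r}$) and mutually orthogonal; for two nonnegative vectors, $\langle c_i, c_j\rangle = 0$ with $i \neq j$ forces $c_i$ and $c_j$ to have no common coordinate on which both are positive, i.e.\ disjoint supports. This is the same elementary fact — the Euclidean inner product of nonnegative vectors vanishes only when their supports are disjoint — that underlies Proposition \ref{SnPositiveToDn}.

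The main obstacle is the second paragraph: rigorously forcing $B = 0$ and $A = C$ from the symmetry relation combined with the sign and orthogonality constraints. The bookkeeping with the four constraints $A^TA = I$, $C^TA = I$, $B^TA = 0$, $B^TC = 0$ is delicate, and one must be careful to use nonnegativity rather than merely linear-algebraic cancellation, since the conclusion $B = 0$ genuinely relies on the sign conditions (a nonsymmetric nonnegative projection can have $B \neq 0$). Everything after reducing to $P = CC^T$ is routine.
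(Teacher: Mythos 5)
Your overall strategy is the same as the paper's: specialize the Belitskii--Lyubich form $P=(A+B)C^T$ of Proposition \ref{thm:nonnegative projection} using $P=P^T$, and then read off disjoint supports from orthogonality of nonnegative columns. The first half of your computation ($PA=(A+B)C^TA=A+B$, and the symmetric counterpart $P=C(A+B)^T$) matches the paper. But the decisive step is a genuine gap: you never actually derive $P=CC^T$. You write that you ``expect'' nonnegativity and the orthogonality relations to force $B=0$ and $A=C$, and you yourself flag this as the main obstacle without carrying it out. Moreover, you are aiming at a stronger intermediate claim than is needed: the corollary only requires $A+B=C$, not $B=0$.

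The missing step is a one-line linear-algebra cancellation, with no sign conditions required (contrary to your remark that nonnegativity must be invoked here). Right-multiply your identity $(A+B)C^T = C(A+B)^T$ by $A$: the left side is $(A+B)C^TA = A+B$ since $C^TA=I$, and the right side is $C(A^TA+B^TA)=C$ since $A^TA=I$ and $B^TA=0$. Hence $A+B=C$, so $P=(A+B)C^T=CC^T$, and $C^TC = C^T(A+B) = C^TA + (B^TC)^T = I$. This is exactly the paper's chain $PA = P^TA \Leftrightarrow (A+B)C^TA = C(A+B)^TA \Leftrightarrow A+B=C$. (If you do want $B=0$, it follows a posteriori: $I = C^TC = (A+B)^T(A+B) = I + A^TB + B^TA + B^TB$ forces the three entrywise-nonnegative terms to vanish, so $B=0$; but this is not needed for the corollary.) Your concluding paragraph on orthonormality of the columns and disjointness of supports of nonnegative orthogonal vectors is correct and matches the paper.
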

\begin{proof}
With reference to \eqref{nonneg projection matrix}, one has
\[
P = P^T \Longrightarrow PA = P^TA \Longleftrightarrow (A+B)C^TA = C(A^T+B^T)A \Longleftrightarrow A+B = C.
\]
Thus by \eqref{nonneg projection matrix} one has $P = CC^T$, and $C^TC = I$.
Since nonnegative vectors can only be orthogonal if they have disjoint supports, the columns of $C$ have this property.

Finally, recall that a projection matrix is symmetric if and only if it corresponds to an orthogonal projection.
\end{proof}

One may easily extend this to orthogonal projection operators, as follows.
\begin{prop}\label{NnPositiveToJConfig}
  Assume that a given orthogonal projection $P_S$ with range $S\subseteq {\mathbb{S}}^n$ satisfies
   $P_S(\mathcal{N}^n)\subseteq \mathcal{N}^n$.
   Then:
   \begin{enumerate}
   \item
    $S$ has a basis of nonnegative matrices with disjoint supports.
  \item
   If, in addition,  $S$ contains the all ones matrix $J$, then it is a partition subspace.
  \item
    If, in addition to the condition in item 2), $P_S({\mathbb{S}}^n_+)\subseteq {\mathbb{S}}^n_+$ and $S$ contains the identity matrix, then $S$ is a Jordan configuration.
  \end{enumerate}
  \end{prop}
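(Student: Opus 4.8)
The plan is to move the whole problem into coordinates through the symmetric vectorization $\svec\colon\mathbb{S}^n\to\mathbb{R}^N$, $N=\binom{n+1}{2}$, which rescales the off-diagonal entries by $\sqrt2$ so that $\langle X,Y\rangle=\svec(X)^T\svec(Y)$. Under this isometry $P_S$ becomes an honest matrix $\hat P\in\mathbb{R}^{N\times N}$ that is still symmetric and idempotent, while $\mathcal{N}^n$ is carried exactly onto the nonnegative orthant $\mathbb{R}^N_+$ (rescaling by $\sqrt2>0$ preserves nonnegativity). Hence the hypothesis $P_S(\mathcal{N}^n)\subseteq\mathcal{N}^n$ reads $\hat P(\mathbb{R}^N_+)\subseteq\mathbb{R}^N_+$, and evaluating $\hat P$ on the unit vectors shows that $\hat P$ is entrywise nonnegative. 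Thus $\hat P$ is a symmetric nonnegative projection matrix, and I would apply Corollary \ref{cor:nonnegative projection} to write $\hat P=\hat C\hat C^T$ with $\hat C\in\mathbb{R}^{N\times r}_+$, $\hat C^T\hat C=I$, whose columns form a nonnegative orthonormal basis of the range with pairwise disjoint supports. Pulling these columns back by $\svec^{-1}$ produces nonnegative matrices $A_1,\dots,A_r$ that are an orthonormal basis of $S$; since each off-diagonal $\svec$-coordinate encodes a symmetric pair of matrix positions, disjointness of supports in $\mathbb{R}^N$ corresponds to pairwise disjoint supports in $[n]\times[n]$. This settles item 1.

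For item 2, I would use $J\in S$ to expand $J=\sum_{i=1}^r c_iA_i$ with $c_i=\langle A_i,J\rangle$, which is strictly positive because each $A_i$ is a nonzero nonnegative matrix. Picking a position $(k,l)\in\mathrm{supp}(A_i)$, disjointness of the supports leaves only the $i$-th term, so $1=J_{kl}=c_i(A_i)_{kl}$; hence $A_i$ equals the constant $1/c_i$ throughout its support, i.e. $A_i=(1/c_i)M_i$ where $M_i$ is the $0/1$ characteristic matrix of $\mathrm{supp}(A_i)$. Since $J$ has no zero entry, the supports must exhaust $[n]\times[n]$, so $\{\mathrm{supp}(A_i)\}_{i=1}^r$ is a partition and the $M_i$ give a $0/1$-basis of $S$ summing to $J$: that is precisely a partition subspace.

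For item 3, I would check the three defining conditions of a Jordan configuration for $M_1,\dots,M_r$. Symmetry of each $M_i$ is clear since it is a scalar multiple of $A_i\in\mathbb{S}^n$, and $I\in\mathrm{span}\{M_i\}=S$ holds by hypothesis. The remaining condition $M_iM_j+M_jM_i\in S$ is where the extra assumption $P_S(\mathbb{S}^n_+)\subseteq\mathbb{S}^n_+$ enters: recalling that $\mathbb{S}^n_+$ is the cone of squares of the special Euclidean Jordan algebra $\mathbb{S}^n$, this assumption is equivalent to $S$ being a Jordan subalgebra, i.e. closed under the Jordan product $X\circ Y=\tfrac12(XY+YX)$. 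Therefore $M_i\circ M_j\in S$, which is exactly $M_iM_j+M_jM_i\in\mathrm{span}\{M_i\}$, and all three conditions hold.

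I expect the only subtle step to be the vectorization in the first paragraph: one must simultaneously confirm that $\svec$ turns $P_S$ into a symmetric projection matrix and $\mathcal{N}^n$ into the entire nonnegative orthant, so that Corollary \ref{cor:nonnegative projection} applies, and that the disjointness of supports obtained in $\mathbb{R}^N$ is faithfully transported back to $\mathbb{S}^n$. After that, items 2 and 3 are short and combinatorial, the latter merely invoking the squares-characterization already recorded for the cone of squares of a special Euclidean Jordan algebra.
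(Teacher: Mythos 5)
Your proposal is correct and follows essentially the same route as the paper: pass to $\svec$-coordinates, observe that the matrix of $P_S$ is a symmetric nonnegative projection, invoke Corollary~\ref{cor:nonnegative projection}, and then argue combinatorially for items 2 and 3 (with item 3 reduced to the Jordan-subalgebra characterization of $\mathbb{S}^n_+$-positivity for unital subspaces, unitality being supplied by $I\in S$). You in fact spell out the steps for items 2 and 3 in more detail than the paper does.
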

\begin{proof}
Since $P_S$ is self-adjoint, we may write it as a symmetric matrix, say $M_{P_S}$, with respect to the standard orthonormal basis of $\mathbb{S}^n$.
For a $X \in \mathbb{S}^n$, we define the vector $\svec{(X)}
\in {\mathbb{R}}^{\frac{1}{2} n(n+1)}$  as $$ \svec{(X)} = \left(
X_{11},\sqrt{2} X_{21} ,\ldots,\sqrt{2} X_{n1},X_{22},\sqrt{2}
X_{32},\ldots,\sqrt{2} X_{n2}, \ldots,X_{nn} \right)^T. $$
Thus $\svec{(X)}$ gives the coordinates of $X$ in the standard orthonormal basis of $\mathbb{S}^n$.
One therefore has
\[
\svec{\left({P_S}(X)\right)} = M_{P_S}\cdot\svec{(X)} \quad\quad \forall X \in \mathbb{S}^n.
\]
Choosing $\svec{(X)}$ as the standard unit vectors in ${\mathbb{R}}^{\frac{1}{2} n(n+1)}$ makes it clear that $M_{P_S} \in \mathcal{N}^{\frac{1}{2}n(n+1)}$.
Thus the first claim now follows from Corollary \ref{cor:nonnegative projection}, namely that $S$ has a basis of nonnegative matrices with pairwise disjoint supports.
If $S$ contains the all-ones matrix $J$, then it must hold that these basis matrices are 0/1, proving the second claim.

Finally, to prove the third claim, we recall that $S$ unital and $P_S({\mathbb{S}}^n_+)\subseteq {\mathbb{S}}^n_+$ implies that $S$ is a Euclidean Jordan algebra.
Since it has a 0/1 basis, it is in fact a Jordan configuration if we also assume $I \in S$.

\end{proof}

The last proposition shows that partition subspaces are closely related to nonnegative projections.

The question remains if there exists an orthogonal projection $P_S:{\mathbb{S}}^n \rightarrow {\mathbb{S}}^n$ with range $S\subseteq {\mathbb{S}}^n$ that satisfies
   $P_S({\mathcal{D}}^n)\subseteq \mathcal{D}^n$, but \underline{not} $P_S(\mathcal{N}^n)\subseteq \mathcal{N}^n$.

   \begin{prop}\label{IdentityImpliesNPos}
     Let $P_S$ be an orthogonal projection with range $S$ that satisfies $P_S({\mathcal{D}}^n)\subseteq \mathcal{D}^n$. If $I\in S$, then $P_S(\mathcal{N}^n)\subseteq \mathcal{N}^n$.
   \end{prop}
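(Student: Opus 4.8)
The plan is to show directly that $P_S$ maps each $X \in \mathcal{N}^n$ to an entrywise nonnegative matrix. Note that $P_S(X) \in S \subseteq \mathbb{S}^n$ is automatically symmetric, so only nonnegativity of the individual entries is at issue. I would treat the diagonal and off-diagonal entries by separate arguments, since the natural approach settles one case but genuinely fails for the other.

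For a diagonal entry I would exploit the self-adjointness of $P_S$. Writing $E_{ii}$ for the matrix with a single $1$ in position $(i,i)$, one has $E_{ii} \in \mathcal{D}^n$, so the hypothesis gives $P_S(E_{ii}) \in \mathcal{D}^n \subseteq \mathcal{N}^n$, i.e.\ $P_S(E_{ii})$ is entrywise nonnegative. Then, for any $X \in \mathcal{N}^n$,
\[
\bigl(P_S(X)\bigr)_{ii} = \langle E_{ii}, P_S(X)\rangle = \langle P_S(E_{ii}), X\rangle \geq 0,
\]
since the Euclidean inner product of two entrywise nonnegative matrices is nonnegative. This step does not use the hypothesis $I \in S$.

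The off-diagonal entries are the main obstacle: the analogous inner-product argument would require $E_{ij}+E_{ji}$ (with $i \neq j$) to lie in $\mathcal{D}^n$, but this matrix is not positive semidefinite, so $P_S(\mathcal{D}^n)\subseteq\mathcal{D}^n$ yields no information about $P_S(E_{ij}+E_{ji})$. Here I would invoke $I \in S$ through a shifting trick. Given $X \in \mathcal{N}^n$, choose $\lambda \geq 0$ large enough that $X + \lambda I \succcurlyeq 0$; since $X + \lambda I$ remains entrywise nonnegative, we have $X + \lambda I \in \mathcal{D}^n$, hence $P_S(X+\lambda I) \in \mathcal{D}^n \subseteq \mathcal{N}^n$ is entrywise nonnegative. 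Because $I \in S$ we have $P_S(I) = I$, so $P_S(X + \lambda I) = P_S(X) + \lambda I$, and off the diagonal the added term $\lambda I$ contributes nothing. Therefore, for $i \neq j$,
\[
\bigl(P_S(X)\bigr)_{ij} = \bigl(P_S(X+\lambda I)\bigr)_{ij} \geq 0.
\]

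Combining the two cases shows every entry of $P_S(X)$ is nonnegative, so $P_S(\mathcal{N}^n) \subseteq \mathcal{N}^n$. The only delicate point is recognizing that the diagonal and off-diagonal entries must be handled by different mechanisms, and that the role of the assumption $I \in S$ is precisely to rescue the off-diagonal entries that the self-adjointness argument alone cannot reach.
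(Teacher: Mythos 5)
Your proof is correct and follows essentially the same route as the paper's: a separate argument for the diagonal entries, and the shift $X \mapsto X + \lambda I$ together with $P_S(I)=I$ for the off-diagonal entries. The only cosmetic difference is in the diagonal step, where you test directly against $E_{ii} \in \mathcal{D}^n$ using self-adjointness, while the paper phrases the same duality via $P_S\bigl((\mathcal{D}^n)^*\bigr) \subseteq (\mathcal{D}^n)^* = \mathcal{N}^n + \mathbb{S}^n_+$.
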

   \begin{proof}
     Let $X\in \mathcal{N}^n$. Since $X\in \left({\mathcal{D}}^n\right)^* = \mathcal{N}^n + {\mathbb{S}}^n_+$ and $P_S(\left({\mathcal{D}}^n\right)^*)\subseteq \left({\mathcal{D}}^n\right)^*$, we know that the diagonal entries of $P_S(X)$ are nonnegative. To see the same for the off-diagonal entries, let $r$ be the spectral radius of $X$. Then $X+rI \in {\mathcal{D}}^n$, which implies that $P_S(X+rI) \in {\mathcal{D}}^n$ has nonnegative off-diagonal entries. Since $P_S(I)=I$ we have $P_S(X) = P_S(X+rI)-rI$, thus showing that $P_S(X)\in \mathcal{N}^n$.
   \end{proof}

    Hence all admissible subspaces that contain $J$ and $I$ are automatically Jordan configurations for conic problems over the doubly nonnegative cone, if they are ${\mathbb{S}}^n_+$-positive, by the last two propositions. It is still an open problem whether ${\mathcal{D}}^n$-positive implies ${\mathbb{S}}^n_+$-positive.

%

\section{Reducing the semidefinite relaxation of the quadratic assignment problem}
\label{sec:QAPreduce}
 A semidefinite programming relaxation for $\mathrm{QAP}(A,B)$ (see \eqref{def:qapAB}), due to Zhao, Karisch, Rendl and Wolkowicz \cite{zhao1998semidefinite}, is
    \begin{align}\label{QAPSDP}
       \min\enspace & \langle B\otimes A ,Y\rangle\\
      \mathrm{s.t.}\enspace & \langle I_n\otimes E_{jj},Y\rangle=1 \text{ for }j\in [n],\nonumber\\
      & \langle E_{jj}\otimes I_n,Y\rangle=1 \text{ for }j\in [n],\nonumber\\
      & \langle I_n\otimes (J_n-I_n)+(J_n-I_n)\otimes I_n,Y\rangle =0,\nonumber \\
      & \langle J_{n^2},Y\rangle = n^2,\nonumber\\
      & Y\in D^{n^2},\nonumber
    \end{align}
    where $A,B\in {\mathbb{S}}^n$. We refer to \cite{povh2009copositive} for more details of this relaxation.

    First, we have to transform this program into the form on the right side of equation \eqref{def:conic opt problem}. We get a feasible solution $X_0$ by forming the outer product of a vectorized permutation-matrix, for example we can set
    \begin{equation*}
      X_0=\mathrm{vec}(I_n)\mathrm{vec}(I_n)^T.
    \end{equation*}
    We get the space $\mathcal{L}$, as seen earlier, by
    \begin{equation*}
      \mathcal{L}=\{X\in {\mathbb{S}}^{n^2}\enspace\vert\enspace\langle A_i,X\rangle=0 \quad\forall i\in [m]\},
    \end{equation*}
    where
    \begin{equation*}
      \{A_i\}_{i\in [m]} = \{J_{n^2}, I_n\otimes (J_n-I_n)+(J_n-I_n)\otimes I_n, I_n\otimes E_{jj} \mbox{ and } E_{jj}\otimes I_n \; (j\in [n])\}
    \end{equation*}
    are the data-matrices of the constraints of the SDP relaxation \eqref{QAPSDP}. Accordingly the orthogonal complement is exactly $\mathcal{L}^\perp = \mathrm{span}\{A_1,\ldots,A_m\}$.

\begin{thm}\label{NNProj}
  Any admissible subspace, say $S \subset \mathbb{S}^n$, for the QAP relaxation \eqref{QAPSDP} with $n>2$, has a basis of nonnegative matrices
  with disjoint supports, provided that it contains the identity matrix or $P_S(\mathcal{N}^n) \subseteq \mathcal{N}^n$.
  If both $P_S(\mathbb{S}^n_+) \subseteq \mathbb{S}^n_+$ and $S$ is unital, then $S$ is a Jordan configuration.
\end{thm}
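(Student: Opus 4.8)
The plan is to derive both assertions from the three doubly-nonnegative propositions already established (Propositions \ref{SnPositiveToDn}, \ref{NnPositiveToJConfig} and \ref{IdentityImpliesNPos}), after translating the abstract admissibility of $S$ into the concrete hypotheses those propositions want. The first thing I would record is that, since $S$ is admissible for \eqref{QAPSDP} whose cone is $\mathcal{K} = \mathcal{D}^{n^2}$, condition (c) of Theorem \ref{thm:equivalent CSICs} hands us $P_S(\mathcal{D}^{n^2}) \subseteq \mathcal{D}^{n^2}$ at once. Given this, the first sentence is almost immediate by a case split on the stated alternative: if $P_S(\mathcal{N}^{n^2}) \subseteq \mathcal{N}^{n^2}$ is assumed directly, then item~1 of Proposition \ref{NnPositiveToJConfig} already yields a nonnegative basis with pairwise disjoint supports; and if instead $I_{n^2} \in S$, then Proposition \ref{IdentityImpliesNPos} upgrades $P_S(\mathcal{D}^{n^2}) \subseteq \mathcal{D}^{n^2}$ to $P_S(\mathcal{N}^{n^2}) \subseteq \mathcal{N}^{n^2}$, reducing to the first case.

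For the second assertion I would again target Proposition \ref{NnPositiveToJConfig}, now item~3, which certifies a Jordan configuration as soon as $P_S(\mathcal{N}^{n^2}) \subseteq \mathcal{N}^{n^2}$, $P_S(\mathbb{S}^{n^2}_+) \subseteq \mathbb{S}^{n^2}_+$ and $I_{n^2}, J_{n^2} \in S$ all hold. The positivity $P_S(\mathbb{S}^{n^2}_+) \subseteq \mathbb{S}^{n^2}_+$ is assumed, and by the result of \cite{permenter2017reduction} recalled above it means exactly that $S$ is a Jordan subalgebra of $\mathbb{S}^{n^2}$; together with ``unital'' this makes $S$ a unital Jordan subalgebra whose unit $e$ is a symmetric idempotent, i.e.\ an orthogonal projection matrix. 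Writing $a \in S$ in block form relative to $\mathbb{R}^{n^2} = \mathrm{range}(e) \oplus \ker(e)$, the identity $e \circ a = a$ forces $a = eae$, so every element of $S$ is supported on $\mathrm{range}(e)$. Everything therefore comes down to the two QAP-specific facts $I_{n^2} \in S$ and $J_{n^2} \in S$: the former pins the unit down to $e = I_{n^2}$ by uniqueness of the Jordan unit, and then Proposition \ref{IdentityImpliesNPos} supplies $P_S(\mathcal{N}^{n^2}) \subseteq \mathcal{N}^{n^2}$ for free, closing the chain.

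The real content, and the step I expect to be the main obstacle, is extracting $I_{n^2}$ and $J_{n^2}$ from the data of \eqref{QAPSDP}. The entry point is that condition (a) of Theorem \ref{thm:equivalent CSICs} forces $X_{0,\mathcal{L}^\perp} = P_{\mathcal{L}^\perp}(X_0) \in S$, where $X_0 = \mathrm{vec}(I_n)\mathrm{vec}(I_n)^T = \sum_{p,q} E_{pq}\otimes E_{pq}$, and that both $J_{n^2}$ and $I_{n^2} = \sum_{j} I_n \otimes E_{jj}$ lie in $\mathcal{L}^\perp = \mathrm{span}\{A_i\}$. I would compute $X_{0,\mathcal{L}^\perp}$ explicitly by solving the small Gram system for the $A_i$, obtaining a nonnegative combination of $I_{n^2}$, $J_{n^2}$ and the remaining tensor-structured generators, and then use the operations available inside the Jordan subalgebra $S$ — squaring the forced elements and re-projecting through $P_{\mathcal{L}}$ via condition (b) — to separate these components, so that $I_{n^2}$ and $J_{n^2}$ are themselves recovered as elements of $S$. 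Once $I_{n^2} \in S$, the identity $a = eae$ collapses to $e = I_{n^2}$ as noted. The hypothesis $n>2$ is exactly where I expect delicacy: in the small cases the tensor-structured constraint generators collapse and the coefficients in $X_{0,\mathcal{L}^\perp}$ degenerate, so the $I_{n^2}$- and $J_{n^2}$-components can no longer be disentangled, which is why the statement must exclude them. Verifying this non-degeneracy uniformly for all $n>2$, and confirming that the squaring step produces $I_{n^2}$ and $J_{n^2}$ rather than only some coarser idempotents, is the crux of the argument.
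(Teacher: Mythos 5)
Your overall architecture matches the paper's: the first claim is indeed the two-line case split through Propositions \ref{NnPositiveToJConfig} and \ref{IdentityImpliesNPos}, and the second claim is indeed reduced to exhibiting $J_{n^2}$ and $I_{n^2}$ inside the Jordan subalgebra $S$ generated by the forced elements. But you have correctly located the crux and then left it undone: you only \emph{propose} to ``separate these components'' by squaring and re-projecting, and you explicitly concede that verifying this for all $n>2$ ``is the crux of the argument.'' That is precisely the content of the theorem, so as written the proof has a genuine gap. The paper closes it by an explicit computation: $X_{0,\mathcal{L}^\perp} = \frac{1}{n^2-n}(J_{n^2} - I_n\otimes J_n - J_n\otimes I_n) + \frac{1}{n-1} I_{n^2}$ lies in $S$, its square is a combination of $J_{n^2}$, $I_n\otimes J_n+J_n\otimes I_n$ and $I_{n^2}$, and the fourth power satisfies $X_{0,\mathcal{L}^\perp}^4 - \frac{1}{(n-1)^2}X_{0,\mathcal{L}^\perp}^2 = \frac{n-2}{n(n-1)^2}J_{n^2}$, whose coefficient is nonzero exactly when $n>2$. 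This is the non-degeneracy you flagged but did not verify; no appeal to $P_{\mathcal{L}}$ is needed for it, only closure under squares.

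A second, smaller divergence: you try to obtain $I_{n^2}\in S$ by further ``disentangling'' components algebraically and then identifying the Jordan unit $e$ with $I_{n^2}$. The paper's route is slicker and avoids more algebra: once $J_{n^2}\in S$, Proposition \ref{NnPositiveToJConfig} already makes $S$ a partition subspace (a $0/1$ basis), and then the observation that $X_{0,\mathcal{L}^\perp}^2$ has constant diagonal entries differing from its off-diagonal entries forces the diagonal to be a union of parts, i.e.\ $I_{n^2}\in S$. (Your $e=eae$ digression is correct but not needed once this is in place.) If you carry out the two displayed computations above, your outline becomes a complete proof essentially identical to the paper's.
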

\begin{proof}
Let $S$ be an admissible subspace for the QAP relaxation \eqref{QAPSDP} with $n>2$. The first claim of the theorem is an immediate consequence of Propositions \ref{NnPositiveToJConfig} and \ref{IdentityImpliesNPos}, since  $P_S(\mathbb{S}^n_+) \subseteq \mathbb{S}^n_+$ implies $I\in S$, as we will see below.

If we further assume $P_S(\mathbb{S}^n_+) \subseteq \mathbb{S}^n_+$ and $S$ unital,
then $S$ is closed under taking squares, i.e.\ it is a Jordan sub-algebra of $\mathbb{S}^n$ \cite[Lemma 5.2.2]{permenter2017reduction}.
Thus $S$ contains $X_{0,\mathcal{L}^\perp}$ and its square, which we will now calculate.
In \cite{brosch2020toric} the authors project $X_0=\mathrm{vec}(I_n)\mathrm{vec}(I_n)^T$ onto $\mathcal{L}^\perp$, the span of the constraint matrices:
\begin{equation*}
       X_{0,\mathcal{L}^\perp} = \frac{1}{n^2-n}(J_{n^2} - I_n\otimes J_n - J_n\otimes I_n) + \frac{1}{n-1} I_{n^2}.
     \end{equation*}

 Straightforward calculation now yields
  \begin{align*}
    X_{0,\mathcal{L}^\perp}^2 & = \frac{n^2-2n+2}{n^2(n-1)^2}J_{n^2}-\frac{1}{n(n-1)^2}(I_n\otimes J_n+J_n\otimes I_n)+\frac{1}{(n-1)^2}I, \\
    X_{0,\mathcal{L}^\perp}^4      & = \frac{1}{(n-1)^2}X_{0,\mathcal{L}^\perp}^2 + \frac{n-2}{n(n-1)^2}J_{n^2}.
  \end{align*}
  Thus $S$ contains the all-ones matrix if $n > 2$, since
  \[
  \frac{n-2}{n(n-1)^2}J_{n^2} = X_{0,\mathcal{L}^\perp}^4- \frac{1}{(n-1)^2}X_{0,\mathcal{L}^\perp}^2,
  \]
   and the right-hand-side terms both belong to
  $S$.
  By {\color{black}Proposition \ref{NnPositiveToJConfig}}, $S$ must therefore have a $0/1$ basis, i.e.\ it must be a partition subspace.
  To show that it is in fact a Jordan configuration, we only need to show still that it contains the identity matrix.
  To this end, it suffices to note that all the diagonal entries of $ X_{0,\mathcal{L}^\perp}^2$ are the same, and different from the off-diagonal entries.
  Since $S$ has a $0/1$ basis, it must therefore contain the identity.
  \end{proof}

The important practical implication of this theorem is that the optimal admissible Jordan configuration $S$  of the QAP relaxation \eqref{QAPSDP}  may be computed using
Algorithm \ref{alg:Partitionrandomized}. The resulting reduction  is at least as good as the known ones from the literature, as we now show.


    \begin{cor}
      This symmetry reduction of the QAP relaxation \eqref{def:qapAB} via Algorithm \ref{alg:Partitionrandomized}  is at least as good as both the group symmetry reduction (see \cite{de2010exploiting,de2012improved})
       and the reduction to the coherent algebra containing the data matrices of the program (via the Weisfeiler-Leman algorithm \cite{WL}).
    \end{cor}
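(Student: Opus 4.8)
The plan is to exploit the minimality of the subspace produced by Algorithm \ref{alg:Partitionrandomized}: since restricting to a smaller admissible subspace yields a reduced SDP of no larger size with the same optimal value, it suffices to show that the subspaces underlying the two reference reductions are themselves admissible partition subspaces, and then invoke that the algorithm returns the coarsest (smallest-dimensional) such subspace. First I would record that Algorithm \ref{alg:Partitionrandomized} outputs a partition subspace $S_{\mathrm{alg}}$ contained in every admissible partition subspace $Q$. The algorithm starts from $\mathrm{part}(C_\mathcal{L})\wedge\mathrm{part}(X_{0,\mathcal{L}^\perp})$, the coarsest partition whose span contains $C_\mathcal{L}$ and $X_{0,\mathcal{L}^\perp}$, and refines only as forced by closure under $P_\mathcal{L}$ and under squaring. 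Since any admissible $Q$ also contains $C_\mathcal{L},X_{0,\mathcal{L}^\perp}$ and is closed under these operations, an induction over the refinement steps shows that $Q$ refines every intermediate partition, hence $S_{\mathrm{alg}}\subseteq Q$ as subspaces.

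Next I would verify that the group symmetry reduction corresponds to an admissible partition subspace. The reduction of \cite{de2010exploiting,de2012improved} projects onto the fixed-point space of the Reynolds operator $R_G(Y)=\frac{1}{|G|}\sum_{g\in G}\Pi_g Y \Pi_g^T$ of the automorphism group $G$ of the data of \eqref{QAPSDP}. This $R_G$ is an orthogonal projection whose range is spanned by the $0/1$ indicator matrices of the orbits of $G$ on ordered pairs of row/column indices, hence a partition subspace; it fixes the objective and maps the feasible affine set into itself, and each conjugation $\Pi_g(\cdot)\Pi_g^T$ preserves both $\mathbb{S}^{n^2}_+$ and $\mathcal{N}^{n^2}$, so $R_G$ satisfies conditions (a)--(c) of Theorem \ref{thm:equivalent CSICs} with $\mathcal{K}=\mathcal{D}^{n^2}$. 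Thus its range is an admissible partition subspace, and by the previous paragraph $S_{\mathrm{alg}}$ is contained in it.

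Finally I would treat the Weisfeiler--Leman reduction. Let $\mathcal{A}$ be the coherent algebra returned by the WL algorithm \cite{WL}, i.e.\ the smallest coherent configuration refining the partition induced by the data matrices $C=B\otimes A$, the $A_i$, $I$ and $J$; its symmetric part $S=\mathcal{A}\cap\mathbb{S}^{n^2}$ is a Jordan configuration. Because $\mathcal{L}^\perp=\mathrm{span}\{A_i\}\subseteq S$, we have $P_{\mathcal{L}^\perp}(Y)\in S$ for all $Y\in S$, which gives both condition (a) (as $X_{0,\mathcal{L}^\perp}\in\mathcal{L}^\perp\subseteq S$ and $C_\mathcal{L}=C-P_{\mathcal{L}^\perp}(C)\in S$) and condition (b) (as $P_\mathcal{L}=\mathrm{Id}-P_{\mathcal{L}^\perp}$). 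Condition (c) holds because $S$, being a Jordan subalgebra, satisfies $P_S(\mathbb{S}^{n^2}_+)\subseteq\mathbb{S}^{n^2}_+$, and Proposition \ref{SnPositiveToDn} then upgrades this to $P_S(\mathcal{D}^{n^2})\subseteq\mathcal{D}^{n^2}$ since $S$ has a nonnegative disjoint-support basis. Hence $S$ is an admissible partition subspace and $S_{\mathrm{alg}}\subseteq S$. The main obstacle is precisely verifying condition (c) in each case, i.e.\ the cone-preservation property of the two projections; for the coherent-algebra case this rests on the equivalence recalled in Section \ref{sec:combreduc} between a subspace being a Jordan subalgebra and its orthogonal projection being $\mathbb{S}^n_+$-positive, combined with Proposition \ref{SnPositiveToDn} to pass to $\mathcal{D}^{n^2}$. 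Containment of $S_{\mathrm{alg}}$ in both reference subspaces then yields a reduced problem of dimension at most that of either reference reduction, which is the claim.
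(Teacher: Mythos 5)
Your proposal is correct and follows essentially the same route as the paper: the paper's (much terser) proof likewise rests on the two facts that the orbital partition of the invariance group is a coherent configuration and that the symmetric part of a coherent configuration is a Jordan configuration, hence both reference reductions yield admissible partition subspaces, and the algorithm's output is minimal among these. You simply spell out the admissibility verifications (conditions (a)--(c) and the passage from $\mathbb{S}^{n^2}_+$-positivity to $\mathcal{D}^{n^2}$-positivity via Proposition \ref{SnPositiveToDn}) that the paper leaves implicit.
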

    \begin{proof}
      The symmetric part of a coherent configuration is a Jordan configuration, and the partition given by the orbitals of a group leaving the program invariant is a coherent configuration.
    \end{proof}

\subsection{Results of reductions of QAPLib problems}

    In practice the (partition) Jordan reduction is not much stronger than group symmetry reduction,
     and reduction to the smallest coherent algebra containing the data matrices.
     When comparing reductions for data from QAPLib \cite{burkard1997qaplib},
     only a single reduction (esc16f), of the ones that were symmetry reduced before, was stronger,
     the others were exactly the same as reported in \cite{de2010exploiting}, where the reduction was done
     using group symmetry. But we managed to  reduce some larger instances for the first time.
     We also do gain a large speed up in determining the reduction, since we avoid having to determine the automorphism groups of matrices.
     In Table \ref{QAPLibTable} we give the dimension of the smallest admissible partition subspace for each problem (for which we determined a reduction),
      the original number of variables of the problem, and the time needed for the reduction.{\color{black} In Table \ref{QAPLibResultsTable} we show the time needed to block-diagonalize (using the algorithm described in \cite{murota2010numerical}) and solve these problems afterwards (if the dimension of the admissible subspace was $\leq 3000$), as well as the resulting bounds. The optimal value of most of these is known exactly, which we give in the last row, taken from \url{http://anjos.mgi.polymtl.ca/qaplib/inst.html}.}
      
{\scriptsize
\npdecimalsign{.}
\nprounddigits{3}

\begin{table}[h!]
  \centering
\scriptsize
    \begin{tabular}{|c|l|l|n{2}{3}|}
    \hline
QAP & Dim. & Red. dim. & \text{Jordan red. (s)}  \\ \hline
chr18b & 52650 & 14742 & 0.3579816 \\
esc16a & 32896 & 150 & 0.1621989 \\
esc16b & 32896 & 155 & 0.191899599 \\
esc16c & 32896 & 405 & 0.194103 \\
esc16d & 32896 & 405 & 0.170898099 \\
esc16e & 32896 & 135 & 0.165644201 \\
esc16f & 32896 & 3 & 0.0999377 \\
esc16g & 32896 & 230 & 0.207212699 \\
esc16h & 32896 & 90 & 0.130286299 \\
esc16i & 32896 & 280 & 0.2538829 \\
esc16j & 32896 & 150 & 0.2137541 \\
esc32a & 524800 & 2112 & 3.826268 \\
esc32b & 524800 & 96 & 3.306410299 \\
esc32c & 524800 & 366 & 3.228223 \\
esc32d & 524800 & 342 & 3.0969752 \\
esc32e & 524800 & 120 & 2.884582401 \\
esc32g & 524800 & 180 & 2.8576561 \\
esc32h & 524800 & 666 & 3.0512814 \\
esc64a & 8390656 & 679 & 57.581179701 \\
kra32 & 524800 & 28752 & 3.099143199 \\
nug12 & 10440 & 2952 & 0.077246099 \\
nug15 & 25425 & 7425 & 0.1520472 \\
nug16b & 32896 & 4704 & 0.147158899 \\
nug20 & 80200 & 21000 & 0.8191775 \\
nug21 & 97461 & 27783 & 0.473928599 \\
nug22 & 117370 & 29766 & 0.7572002 \\
nug24 & 166176 & 41760 & 1.010178699 \\
nug25 & 195625 & 28675 & 1.132266399 \\
nug27 & 266085 & 75087 & 1.489256 \\
nug28 & 307720 & 78792 & 1.865216699 \\
scr12 & 10440 & 2952 & 0.0565526 \\
scr15 & 25425 & 13275 & 0.146641401 \\
tai64c & 8390656 & 75 & 55.8387828 \\
tho30 & 405450 & 112950 & 2.9268635 \\
tho40 & 1280800 & 333600 & 9.476511399 \\
wil50 & 3126250 & 813750 & 27.1227091 \\\hline

    \end{tabular}
  \caption{Results for numerical symmetry reduction of QAPLib problems using Algorithm \ref{alg:Partitionrandomized}.}\label{QAPLibTable}
\end{table}

\npdecimalsign{.}
\nprounddigits{3}

\begin{table}[h!]
  \centering
\scriptsize
    \begin{tabular}{|c|n{4}{3}|n{2}{3}|c|n{7}{3}|l|}
    \hline
QAP & \text{block-diag. (s)} & \text{solve (s)} & blocks (size $\times$ mult) & \text{optimal value \eqref{QAPSDP}} & \text{QAP optimum}  \\ \hline
esc16a & 0.8839299 & 0.2292924 & $6\times5$,$3\times5$,$1\times15$, & 63.28541990404178 & 68\\
esc16b & 0.749267 & 0.284997 & $7\times5$,$1\times15$, & 289.9988558060066 & 292\\
esc16c & 2.765734201 & 0.7588051 & $12\times5$,$1\times15$, & 153.9988307193536 & 160\\
esc16d & 2.7136434 & 0.3730593 & $12\times5$,$1\times15$, & 12.99999863716645 & 16\\
esc16e & 0.753351399 & 0.158655899 & $6\times5$,$2\times5$,$1\times15$, & 26.336797695483035 & 28\\
esc16f & 0.0402993 & 0.0479962 & $1\times3$, & 0.0 & 0\\
esc16g & 1.1086753 & 0.2171645 & $9\times5$,$1\times5$, & 24.740307071561457 & 26\\
esc16h & 0.414226699 & 0.125240401 & $5\times5$,$1\times15$, & 976.2279051431194 & 996\\
esc16i & 1.572374401 & 0.296006 & $10\times5$,$1\times5$, & 11.374895676633026 & 14\\
esc16j & 0.6901062 & 0.167068 & $7\times5$,$1\times10$, & 7.794218632983682 & 8\\
esc32a & 482.027258499 & 23.958262401 & $26\times6$,$1\times6$, & 103.31959358725226 & 130\\
esc32b & 11.501883599 & 0.040621 & $2\times24$,$1\times24$, & 131.8828076108162 & 168\\
esc32c & 51.1459494 & 0.2958341 & $10\times6$,$1\times36$, & 615.1780270156789 & 642\\
esc32d & 56.076293301 & 0.212864199 & $9\times6$,$2\times12$,$1\times36$, & 190.22703493191395 & 200\\
esc32e & 11.436337901 & 0.0542997 & $5\times6$,$1\times30$, & 1.8999999468859592 & 2\\
esc32g & 14.941279001 & 0.0960032 & $7\times6$,$1\times12$, & 5.833332018693771 & 6\\
esc32h & 114.9430212 & 1.134790199 & $14\times6$,$1\times36$, & 424.39840742624796 & 438\\
esc64a & 1985.9167665 & 0.885204601 & $13\times7$,$2\times7$,$1\times21$, & 97.7497923767676 & 116\\
nug12 & 12.883667299 & 80.018891801 & $48\times2$,$24\times2$, & 567.9696928304795 & 578\\
scr12 & 12.894117901 & 83.330367401 & $48\times2$,$24\times2$, & 31409.996810456192 & 31410\\
tai64c & 182.909332799 & 0.1525016 & $2\times15$,$1\times30$, & 1811366.4813202813 & $\geq$1855928\\\hline

    \end{tabular}
  \caption{Details on solving \eqref{QAPSDP} for QAPLib instances via block-diagonalization.\label{QAPLibResultsTable}}
\end{table}
    }
\section{Reducing the $\vartheta'$-function}
\label{sec:ThetaReduc}
{\color{black}
The $\vartheta'$-function of a Graph $G = (V,E)$, as given in \eqref{thetaPrime}, is a doubly nonnegative semidefinite program of size $n\coloneqq\vert V\vert$. Here we can say a bit less about admissible subspaces in the general case. As seen in Theorem \ref{thm:equivalent CSICs}, every admissible subspace needs to contain $C_\mathcal{L}$ and $X_{0,\mathcal{L}^\perp}$. Here it is straightforward to see that $C_\mathcal{L}$ is exactly the adjacency matrix of the complementary graph $\overline{G}$, and $X_{0,\mathcal{L}^\perp} = \frac{1}{n}I_n$. Thus we know at least that $S$ contains the identity. This implies that every admissible subspace for the $\vartheta'$-function has a basis of nonnegative matrices with disjoint supports, by Propositions \ref{NnPositiveToJConfig} and \ref{IdentityImpliesNPos}.

But an admissible subspace here does not necessarily need to contain the all-one matrix $J_n$. One obtains an easy example by $G = ([n], \{\{i,j\} \text{ if } i > m \text{ or } j > m\}$ for $n > m > 0$. It is easy to check that an admissible subspace for this problem is of the form
$$\begin{pmatrix}
    a & b & b &  &  &  &  \\
    b & a & b &  &  &  &  \\
    b & b & a &  &  &  &  \\
     &  &  & c &  &  &  \\
     &  &  &  & c &  &  \\
     &  &  &  &  & c &  \\
     &  &  &  &  &  & c
  \end{pmatrix},$$
  here shown for $n = 7$ and $m = 3$. While this case is not too interesting, this shows that the Jordan-Reduction can be better than a group-symmetry reduction, which would result in the five-dimensional subspace given by
  $$\begin{pmatrix}
    a & b & b & d & d & d & d \\
    b & a & b & d & d & d & d \\
    b & b & a & d & d & d & d \\
    d & d & d & c & e & e & e \\
    d & d & d & e & c & e & e \\
    d & d & d & e & e & c & e \\
    d & d & d & e & e & e & c
  \end{pmatrix}.$$
  Do note though that in this case the three additional variables will be eliminated by the constraints of the SDP soon after.

}
\subsection{The $\vartheta'$-function of \ER graphs}\label{ThetaER}

{\color{black}

Let $q$ be an odd prime, and let $V = \mathrm{GF}(q)^3$ be a three dimensional vector space over the finite field of order $q$. The set of one dimensional subspaces, i.e. the projective plane, of $V$ is denoted by $\mathrm{PG}(2,q)$. There are $q^2+q+1$ such subspaces, which are the vertices of the \ER graph $\mathrm{ER}(q)$. Two vertices are adjacent if they are distinct and orthogonal, i.e. for two representing vectors $x$ and $y$ we have $x^Ty=0$. We are interested in the size of a maximum stable set of these graphs, specifically upper bounds for this value.

In \cite{Godsil2008EVBoundER} the authors derive the upper bound
\begin{equation}\label{GodsilEVBound}
  \frac{\sqrt{q}+\sqrt{q+4(q+1)\frac{q+\sqrt{q}+1}{q^2+q+1}}}{2\frac{q+\sqrt{q}+1}{q^2+q+1}},
\end{equation}
which was shown to be at most as good as the $\vartheta'$-function in \cite{klerk2009ERGraphs}.

The $\vartheta'$-function of $\mathrm{ER}(q)$ is a doubly nonnegative semidefinite program of size $q^2+q+1$. Without further reductions one can practically solve this for up to $q=17$. In \cite{klerk2009ERGraphs} the authors reduced the problem size enough to solve it for up to $q=31$, and in this paper we managed to solve it for up to $q=97$.

We applied the reduction algorithm, numerically block-diagonalized (more on that next section) and solved the resulting problems for all primes from $q=3$ to $97$, as shown in Tables \ref{ERReductionTable} and \ref{ERBoundsTable}. Interestingly, the reduced block sizes always are one block of size $3\times 3$, and $\lceil \frac{q}{2}\rceil$ blocks of size $2\times 2$, i.e. the problem nearly reduces to a second order cone problem. By comparison, the problem was reduced to SDPs of matrix size $2q+11$ in \cite{klerk2009ERGraphs}.
}

\npdecimalsign{.}
\nprounddigits{3}

\begin{table}[h!]
  \centering
\scriptsize
    \begin{tabular}{|l|l|n{3}{3}|n{4}{3}|c|}
    \hline
$q$  & $q^2+q+1$ & \text{Jordan red. (s)} & \text{block diag. (s)} & blocks (size $\times$ mult.)   \\ \hline
3  & 13   & 0.000558001   & 0.0014838      & $3\times 1,2\times 2$  \\
5  & 31   & 0.002265899   & 0.0058976      & $3\times 1,2\times 3$  \\
7  & 57   & 0.0070165     & 0.011389401    & $3\times 1,2\times 4$  \\
11 & 133  & 0.0666903     & 0.0330895      & $3\times 1,2\times 6$  \\
13 & 183  & 0.0916287     & 0.050578501    & $3\times 1,2\times 7$  \\
17 & 307  & 0.240929799   & 0.169581799    & $3\times 1,2\times 9$  \\
19 & 381  & 0.4205891     & 0.3032458      & $3\times 1,2\times 10$ \\
23 & 553  & 1.01934       & 0.7226254      & $3\times 1,2\times 12$ \\
29 & 871  & 2.455367801   & 2.392133399    & $3\times 1,2\times 15$ \\
31 & 993  & 3.398327099   & 3.664145299    & $3\times 1,2\times 16$ \\
37 & 1407 & 7.744959701   & 9.0679678      & $3\times 1,2\times 19$ \\
41 & 1723 & 13.038789501  & 16.3578363     & $3\times 1,2\times 21$ \\
43 & 1893 & 14.5333772    & 21.507566401   & $3\times 1,2\times 22$ \\
47 & 2257 & 19.9104259    & 36.710783      & $3\times 1,2\times 24$ \\
53 & 2863 & 33.2706014    & 72.051950401   & $3\times 1,2\times 27$ \\
59 & 3541 & 51.462697299  & 140.1193346    & $3\times 1,2\times 30$ \\
61 & 3783 & 54.7143953    & 166.2674879    & $3\times 1,2\times 31$ \\
67 & 4557 & 78.579218401  & 332.437736999  & $3\times 1,2\times 34$ \\
71 & 5113 & 115.303399801 & 487.1617033    & $3\times 1,2\times 36$ \\
73 & 5403 & 118.0582613   & 545.4981241    & $3\times 1,2\times 37$ \\
79 & 6321 & 179.0841249   & 886.0648295    & $3\times 1,2\times 40$ \\
83 & 6973 & 215.9833152   & 1336.901259101 & $3\times 1,2\times 42$ \\
89 & 8011 & 293.9469087   & 1931.7226019   & $3\times 1,2\times 45$ \\
97 & 9507 & 434.3407695   & 2912.8402779   & $3\times 1,2\times 49$ \\\hline

    \end{tabular}
  \caption{Results of the numerical symmetry reduction of the Theta' function of Erdos-Renyi graphs.}\label{ERReductionTable}
\end{table}

\begin{table}[h!]
  \centering
\scriptsize
    \begin{tabular}{|l|n{1}{3}|n{3}{3}|n{3}{3}|}
    \hline
$q$  & \text{solve time (s)} & \text{$\vartheta'(\mathrm{ER}(q))$} & \text{EV bound \eqref{GodsilEVBound}}   \\ \hline
3  & 0.0022267    & 5.0000000765254375 & 5.559861885833123\\
5  & 0.002530101  & 10.066926506194214 & 10.55584699685833\\
7  & 0.002468799  & 15.743402859021042 & 16.72680909967116\\
11 & 0.002723201  & 31.08770429354092  & 32.050620288667396\\
13 & 0.0027327    & 40.50939213388844  & 41.024888544944595\\
17 & 0.0035806    & 60.22099922159293  & 61.29126346783692\\
19 & 0.0040003    & 71.3009523623285   & 72.49317178390179\\
23 & 0.0040835    & 96.24003796685733  & 96.85844748525206\\
29 & 0.0056097    & 136.97844019579597 & 137.90950659356875\\
31 & 0.007177601  & 151.7024311425505  & 152.70736432626447\\
37 & 0.006835101  & 199.2688507099146  & 200.2030992687015\\
41 & 0.0085739    & 233.39019647298073 & 234.3118960646621\\
43 & 0.009387901  & 250.91677057318012 & 252.06324483077262\\
47 & 0.0112386    & 287.77164146620225 & 288.90727694602305\\
53 & 0.0128003    & 346.6261041746673  & 347.388336922548\\
59 & 0.014773     & 408.54845511033363 & 409.5338769450585\\
61 & 0.0149637    & 430.21947678719636 & 431.03021084515547\\
67 & 0.0201625    & 496.4378437355285  & 497.77513984846314\\
71 & 0.0189927    & 543.127953133481   & 544.0954929733048\\
73 & 0.020974099  & 566.9145884004363  & 567.7871911965542\\
79 & 0.9450113    & 639.6442218693393  & 640.9316087535851\\
83 & 1.1105472    & 690.583316369887   & 691.3752759274242\\
89 & 0.1145007    & 768.4692537601204  & 769.4814551650646\\
97 & 0.108091901  & 877.075250044101   & 878.0269560540745\\\hline

    \end{tabular}
  \caption{The resulting bounds for the stable set number of Erdos-Renyi graphs.}\label{ERBoundsTable}
\end{table}

    \section{The Julia package}\label{JuliaPackageSection}
    \label{sec:JuliaPackage}
    {\color{black}

    We provide a package "SDPSymmetryReduction.jl" as part of the Julia registry, available at \url{https://github.com/DanielBrosch/SDPSymmetryReduction.jl}. We provide functions to both find an optimal admissible partition subspace for a given SDP, as well as to consequently block-diagonalize it.

    To enter a semidefinite programm one has to provide (potentially sparse) vectors and matrices $C\in \mathbb{R}^{n^2},A\in \mathbb{R}^{m\times n^2}$ and $b\in\mathbb{R}^m$ as in \eqref{def:conic opt problem} (vectorizing the variable $X$). We provide examples for how one can approach this for both the $\vartheta'$-function of a given graph and for the SDP-bound \eqref{QAPSDP}. Since we return a partition based symmetry reduction, it is not necessary to give entry-wise nonnegativity constraints, as long as one remembers to use nonnegative variables in the final, reduced SDP (see also the example in \ref{CodeExample}).

    \subsubsection*{Determining an admissible subspace}
The function `$\mathrm{admPartSubspace}$` determines an optimal admissible partition subspace for the problem, by Algorithm \ref{alg:Partitionrandomized}. This is done using a randomized Jordan-reduction algorithm, and it returns a Jordan algebra. SDPs can be restricted to such a subspace without changing their optimal value.

Given $C$,$A$ and $b$, $\mathrm{admPartSubspace}(C,a,b)$ returns a $\mathrm{Partition}$ $P$ with $P.n$ giving the number of parts of the partition, and $P.P$ returning an integer valued matrix with entries $1,\ldots,n$ defining the partition.

For example, let $C$, $A$ and $b$ define the $\vartheta'$-function of the cycle graph $C_5$. If we label the vertices such that its adjacency matrix is
$$\begin{pmatrix}
    0 & 1 & 0 & 0 & 1 \\
    1 & 0 & 1 & 0 & 0 \\
    0 & 1 & 0 & 1 & 0 \\
    0 & 0 & 1 & 0 & 1 \\
    1 & 0 & 0 & 1 & 0
  \end{pmatrix},$$
then calling $\mathrm{admPartSubspace}(C,a,b)$ returns the partition $P$ with $P.n=3$ and
$$P.P = \begin{pmatrix}
    1 & 2 & 3 & 3 & 2 \\
    2 & 1 & 2 & 3 & 3 \\
    3 & 2 & 1 & 2 & 3 \\
    3 & 3 & 2 & 1 & 2 \\
    2 & 3 & 3 & 2 & 1
  \end{pmatrix},$$
i.e. we can restrict the feasible set to the three-dimensional subspace given by $P$.

\subsubsection*{Block-diagonalizing a Jordan-algebra}
The function `$\mathrm{blockDiagonalize}$` determines a block-diagonalization of a (Jordan)-algebra given by a partition $P$ using a randomized algorithm. It implements the Algorithm from \cite{murota2010numerical} (see also \cite{de2011numerical}). To our knowledge this is the first implementation available to the public.

$\mathrm{blockDiagonalize}(P)$ returns a real block-diagonalization $blkd$, if it exists, otherwise `nothing`.
\begin{itemize}
  \item $blkd.blkSizes$ returns an integer array of the sizes of the blocks.
  \item $blkd.blks$ returns an array of length $P.n$ containing arrays of (real) matrices of sizes $blkd.blkSizes$. I.e. $blkd.blks[i]$ is the image of the basis element given by the $0/1$-matrix with a one in the positions where $P.P$ is $i$.
\end{itemize}

$\mathrm{blockDiagonalize}(P; complex = true)$ returns the same, but with complex valued matrices, and should be used if no real block-diagonalization was found. To use the complex matrices in practice, remember that a Hermitian matrix $Y$ is positive semidefinite if and only if $$\begin{pmatrix}
                        \mathrm{real}(Y) & -\mathrm{imag}(Y) \\
                        \mathrm{imag}(Y)& \mathrm{real}(Y)
                      \end{pmatrix}$$ is positive semidefinite.

Continuing the example of reducing $\vartheta'(C_5)$, $\mathrm{blockDiagonalize}(P)$ here returns a block-diagonalization $blkd$ with $blkd.blkSizes = [1,1,1]$ (i.e. three blocks of size $1\times 1$), and the image of the basis is
\begin{align*}
  blkd.blks \approx [& [1,1,1], \\
   & [-1.618, 0.618,2],\\
   & [0.618,-1.618,2]].
\end{align*}
This means that
$$
\begin{pmatrix}
    a & b & c & c & b \\
    b & a & b & c & c \\
    c & b & a & b & c \\
    c & c & b & a & b \\
    b & c & c & b & a
  \end{pmatrix} \succcurlyeq 0 \Leftrightarrow \left\{
    \begin{array}{@{} l @{}}
      a - 1.618b + 0.618c \geq 0,  \\
      a + 0.618b - 1.618\geq 0, \\
      a + 2b + 2c \geq 0,
    \end{array}\right.
$$
which allows us to rewrite $\vartheta'(C_5)$ as a linear program in the three variables $a,b,c$.

%
    }

\section{Concluding remarks}
We have extended the Jordan symmetry reduction method to the doubly nonnegative cone, and showed that for this cone that the restriction to admissible subspaces that are partition-spaces is not a strong requirement in Section \ref{sec:extenD}. We have shown that indeed the optimal admissible subspace semidefinite programming relaxation of the quadratic assignment problem is a coherent-configuration in Theorem \ref{NNProj} of Section \ref{sec:QAPreduce} under some weak requirements, and applied this to the symmetric instances of QAPLib. In Section \ref{sec:ThetaReduc} we have seen that the optimal admissible subspace of the $\vartheta'$-function can always be given by a nonnegative basis with disjoint supports, and applied this reduction to \ER-graph instances. Finally we describe the Julia package "SDPSymmetryReduction", available at \url{https://github.com/DanielBrosch/SDPSymmetryReduction.jl}, implementing the described algorithms in Section \ref{sec:JuliaPackage}. In \ref{CodeExample} we give a complete code example as to how one can use this package to calculate $\vartheta'(\mathrm{ER}(q))$.


\appendix
\section{Example use of the software package}\label{CodeExample}
In this appendix we give a complete example how to setup the $\vartheta'$-function of $\mathrm{ER}(q)$, where $q=31$, and how to block-diagonalize, and then solve the reduced SDP with JuMP and Mosek.

    \lstset{language=Julia}

    \begin{lstlisting}
using SDPSymmetryReduction
using LinearAlgebra, SparseArrays
using JuMP, MosekTools

## Calculating the Theta'-function of Erdos-Renyi graphs
q = 31

# Generating the adjacency matrix of ER(q)
PG2q = vcat([[0, 0, 1]],
            [[0, 1, b] for b = 0:q-1],
            [[1, a, b] for a = 0:q-1 for b = 0:q-1])
Adj = [x' * y % q == 0 for x in PG2q, y in PG2q]
Adj[diagind(Adj)] .= 0

# Theta' SDP
N = length(PG2q) # = q^2+q+1
C = ones(N^2)
A = hcat(vec(Adj), vec(Matrix(I, N, N)))'
b = [0, 1]

# Find the optimal admissible subspace (= Jordan algebra)
P = admPartSubspace(C, A, b, true)

# Block-diagonalize the algebra
blkD = blockDiagonalize(P, true)

# Calculate the coefficients of the new SDP
PMat = hcat([sparse(vec(P.P .== i)) for i = 1:P.n]...)
newA = A * PMat
newB = b
newC = C' * PMat

# Solve with optimizer of choice
m = Model(Mosek.Optimizer)

# Initialize variables corresponding parts of the partition P
# >= 0 because the original SDP-matrices are entry-wise nonnegative
x = @variable(m, x[1:P.n] >= 0)

@constraint(m, newA * x .== newB)
@objective(m, Max, newC * x)

# Setup the block-diagonalized PSD-constraints
psdBlocks = sum(blkD.blks[i] .* x[i] for i = 1:P.n)
for blk in psdBlocks
    if size(blk, 1) > 1
        @constraint(m, blk in PSDCone())
    else
        @constraint(m, blk .>= 0)
    end
end

optimize!(m)

@show termination_status(m)
@show value(newC * x)

    \end{lstlisting}

{\footnotesize
\bibliographystyle{plain}

}

\end{document}